\documentclass[oneside, 12pt]{amsart}

\usepackage{amsthm}
\usepackage[T1]{fontenc}
\usepackage{ifpdf}

\def\NoLabels{YES}
\def\HYPER{YES}

\ifpdf\relax\else\let\HYPER=\undefined\fi

\ifdefined\HYPER
\usepackage[hidelinks, pdfstartview=FitB, pdfpagemode=UseNone]{hyperref}
\else
\newcommand{\href}[2]{\relax}
\newcommand{\url}[1]{#1}
  \def\NoLabels{YES}
\fi

\ifdefined\NoLabels\relax\else%
\usepackage[condensed]{tgheros}

\usepackage[notref, notcite]{showkeys}

\fi
\DeclareRobustCommand{\SkipTocEntry}[5]{}

\usepackage{amssymb,nccmath}
\usepackage{esint}
\usepackage{mathtools}
\usepackage[english]{babel}
\usepackage{epsfig}
\usepackage{mathptmx}
\usepackage{amsmath,amsfonts,amssymb}
\usepackage{mathtools}
\usepackage{enumitem}
\usepackage{mathrsfs}
\usepackage[all]{xy}
\usepackage{graphicx}
\usepackage{latexsym}
\usepackage{verbatim}
\usepackage{xcolor}
\usepackage{xifthen}

\makeatletter
\newsavebox\myboxA
\newsavebox\myboxB
\newlength\mylenA

\newcommand*\xoverline[2][0.75]{%
    \sbox{\myboxA}{$\m@th#2$}%
    \setbox\myboxB\null% Phantom box
    \ht\myboxB=\ht\myboxA%
    \dp\myboxB=\dp\myboxA%
    \wd\myboxB=#1\wd\myboxA% Scale phantom
    \sbox\myboxB{$\m@th\overline{\copy\myboxB}$}%  Overlined phantom
    \setlength\mylenA{\the\wd\myboxA}%   calc width diff
    \addtolength\mylenA{-\the\wd\myboxB}%
    \ifdim\wd\myboxB<\wd\myboxA%
       \rlap{\hskip 0.5\mylenA\usebox\myboxB}{\usebox\myboxA}%
    \else
        \hskip -0.5\mylenA\rlap{\usebox\myboxA}{\hskip 0.5\mylenA\usebox\myboxB}%
    \fi}
\makeatother

\usepackage[normalem]{ulem}
\usepackage{accents}

\usepackage{marginnote}
\reversemarginpar
\usepackage{cancel}

\usepackage{mparhack}
\usepackage[textwidth=2.5cm]{todonotes}
\newtheorem{theorem}{Theorem}[section]
\newtheorem{lemma}[theorem]{Lemma}
\newtheorem{proposition}[theorem]{Proposition}
\newtheorem{corollary}[theorem]{Corollary}

\theoremstyle{definition}
\newtheorem{definition}[theorem]{Definition}
\newtheorem{example}[theorem]{Example}

\theoremstyle{remark}
\newtheorem{remark}[theorem]{Remark}

\numberwithin{equation}{section}

\newcommand{\+}{\nobreak\hspace{.087em}\nopagebreak}

\def\Re{\mathop{\mathrm{Re}}}
\def\Im{\mathop{\mathrm{Im}}}

\def\D{\mathbb D}

\def\C{\mathbb C}
\def\Hol{{\sf Hol}}
\def\Aut{{\sf Aut}(\mathbb D)}

\def\N{\mathbb N}

\def\id{{\sf id}}

\newcommand{\hd}{\rho}
\newcommand{\phd}{\rho^*}

\newcommand{\Zn}{\mathcal{Z}_{\scriptscriptstyle\forall}}

\newcommand{\UH}{\mathbb{H}}
\newcommand{\UHR}{{\mathbb{H}_{\mathrm R}}}
\newcommand{\UD}{\mathbb{D}}
\newcommand{\UC}{\partial\UD}
\newcommand{\Complex}{\mathbb{C}}
\newcommand{\Real}{\mathbb{R}}
\newcommand{\Natural}{\mathbb{N}}
\newcommand{\di}{\mathop{\mathrm{d}}\nolimits}
\newcommand{\anglim}{\angle\lim}

\renewcommand{\ge}{\geqslant}
\renewcommand{\le}{\leqslant}
\renewcommand{\geq}{\geqslant}

\newcommand{\StepC}[2]{\medskip\noindent{\textsc{Case}~#1: #2.}}

\newenvironment{ourlist}{\begin{enumerate}[label={\bf (\arabic*)}, ref={\rm (\arabic*)}, left=0em]
\everydisplay{\makeatletter\def\@eqnum{\normalfont(\theequation)}\makeatother}}{\end{enumerate}}
\newenvironment{romlist}{\begin{enumerate}[label={\rm (\roman*)}, ref={\rm (\roman*)}, left=.7em]}{\end{enumerate}}

\newcommand{\dff}[1]{\textsl{#1}}

\newcommand\Wtilde[1]{\stackrel{\sim}{\smash{#1}\rule{0pt}{0.54ex}}}
\newcommand{\mtilde}[1]{\mathchoice{\,\widetilde{\!#1\!}\,}{\,\widetilde{\!#1\!}\,}%
{\raise.25ex\hbox{$\scriptstyle\Wtilde{#1}$}}%
{\Wtilde{#1}}}

\usepackage{xpatch}

\begin{document}
\title{On generalized embeddability of a non-elliptic self-map}

\author[M. D. Contreras]{Manuel D. Contreras $^\dag$}

\author[S. D\'{\i}az-Madrigal]{Santiago D\'{\i}az-Madrigal $^\dag$}
\address{Camino de los Descubrimientos, s/n\\
	Departamento de Matem\'{a}tica Aplicada II and IMUS\\
	Universidad de Sevilla\\
	Sevilla, 41092\\
	Spain.}\email{contreras@us.es} \email{madrigal@us.es}

\author[P. Gumenyuk]{Pavel Gumenyuk$^\ddag$}
\address{Pavel Gumenyuk: Department of
     Mathematics\\  Politecnico di Milano, via E. Bonardi 9\\ Milan 20133, Italy.}\email{pavel.gumenyuk@polimi.it}

\thanks{$^\dag$ Partially supported by Ministerio de Ciencia, Innovaci\'on y Universidades, Spain, projects PID2022-136320NB-I00 and PID2025-167700NB-I00. The authors thank IMUS-Mar\'{\i}a de Maeztu grant CEX2024-001517-M - Apoyo a Unidades de Excelencia Mar\'{\i}a de Maeztu for supporting this research, funded by MICIU/AEI/ 10.13039/501100011033.}
\thanks{$^\ddag$ Partially supported  by GNSAGA INdAM (\textit{Istituto Nazionale di Alta Matematica ``Francesco Severi''}) Italy}
\thanks{The authors declare that no artificial intelligence (AI) tools, generative AI, or AI-assisted technologies were used in the preparation, writing, or analysis of this manuscript.}

%\date\today

\subjclass[2020]{Primary 30C55, 37F44, 30D05}

\begin{abstract}
We study the structure of the centralizer of a non-elliptic holomorphic self-map of the unit disc and its relation with continuous dynamical structures. We prove that if the identity is not isolated in the centralizer of a non-elliptic self-map $\varphi \in \Hol(\D)$, then $\varphi$ is univalent and its centralizer contains a non-trivial continuous one-parameter semigroup of holomorphic self-maps. This result provides a general criterion for the existence of continuous dynamics arising from discrete iteration.

Our approach is based on the theory of holomorphic models and on simultaneous linearization techniques for commuting self-maps. In particular, we exploit recent developments on simultaneous linearization for parabolic self-maps with positive hyperbolic step. As a consequence, we obtain new sufficient conditions for the embeddability of holomorphic self-maps into continuous semigroups.
\end{abstract}

\maketitle

\section{Introduction}\label{Introduction}

The iteration theory of holomorphic self-maps of the unit disc has long been a central topic in complex dynamics and geometric function theory. A particularly fruitful line of research concerns the algebraic and dynamical structure associated with a given holomorphic self-map, especially through its semigroup properties and its interaction with other commuting maps.

Given a holomorphic self-map $\varphi \in \Hol(\D)$, its centralizer is defined as
$$
\Zn(\varphi):=\{\psi \in \Hol (\D): \, \psi\circ\varphi=\varphi\circ \psi\}.
$$

The structure of $\Zn(\varphi)$ encodes important dynamical information about $\varphi$. In particular, the presence of non-trivial elements commuting with $\varphi$ often reflects hidden rigidity or additional symmetries in the underlying dynamics. Understanding when the centralizer is ``large''~--- for instance, when the identity is not isolated in $\Zn(\varphi)$~--- naturally leads to questions about the existence of continuous dynamical structures.

These considerations are closely related to the classical embedding problem: given $\varphi \in \Hol(\D)$, determine whether there exists a continuous one-parameter semigroup $(\phi_t)_{t\ge0}$ of holomorphic self-maps of the disc such that $\phi_{t_0}=\varphi$ for some $t_0>0$. This problem has been extensively studied and widely discussed in the literature, as it connects iteration theory, functional equations, infinitesimal generators, and the geometry of Koenigs domains.
Embeddability in terms of the self-map $\varphi$ is well understood in only a few settings.
For example, it is completely characterized for M\"obius transformations of~$\C$ mapping the unit disc into itself, see e.g. \cite[Theorems~9.3.4 and~9.3.5]{BCD-Book}.

In this paper we investigate non-elliptic self-maps of the unit disc from the perspective of their centralizers.
Our main result shows that the existence of holomorphic self-mappings $\psi\neq\id_\UD$ arbitrarily close to the identity map~$\id_\UD$ and, at the same time, commuting with a given non-elliptic self-map ${\varphi\in\Hol(\UD)}$ forces the presence of a continuous dynamical structure within~$\Zn(\varphi)$.
More precisely:
\begin{theorem}\label{Thm:embedding}
Let $\varphi \in \Hol(\D)$ be non-elliptic. If $\id_{\D}$ is not isolated in $\Zn(\varphi)$, then $\varphi$ is univalent and $\Zn(\varphi)$ contains a non-trivial continuous one-parameter semigroup~$(\phi_t)$.
\end{theorem}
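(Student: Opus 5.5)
We argue through the holomorphic (canonical) model of $\varphi$. Since $\varphi$ is non-elliptic, there is a semiconjugacy $h\colon\UD\to\Omega$ with $h\circ\varphi=\tau\circ h$, where $(\Omega,h,\tau)$ is one of three types. In the hyperbolic case $\Omega=\UH$ and $\tau(w)=\lambda w$. In the parabolic case of positive hyperbolic step, $\Omega$ is a half-plane and $\tau(w)=w\pm 1$. In the parabolic case of zero hyperbolic step, $\Omega=\C$ and $\tau(w)=w+1$. In every case $\tau$ is an automorphism of $\Omega$ embedded in a one-parameter group $(\tau_t)$, and $\bigcup_n \tau^{-n}(h(\UD))=\Omega$.

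We use two standard facts about the centralizer. First, every $\psi\in\Zn(\varphi)$ induces a unique holomorphic map $\Psi\colon\Omega\to\Omega$ with $h\circ\psi=\Psi\circ h$ and $\Psi\circ\tau=\tau\circ\Psi$. Second, the correspondence $\psi\mapsto\Psi$ is continuous with respect to locally uniform convergence. We aim to show that the commutation constraint forces $\Psi$ to lie in an explicit finite-dimensional family of affine maps.
\begin{romlist}
\item In the hyperbolic and zero-step cases, commuting with $\tau$ together with the model's universality forces $\Psi=\tau_t$, or at worst $\Psi$ is a translation or dilation.
\item In the positive-step parabolic case, $\Psi(w)=w+c(w)$ with $c$ $1$-periodic, and we would invoke the simultaneous linearization for commuting parabolic maps of positive step mentioned in the abstract. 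That result says $\psi$ and $\varphi$ are linearized by a common model in which $\Psi$ is a translation $w\mapsto w+\beta$, with $\beta$ in the closed half-plane compatible with~$\Omega$.
\end{romlist}

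Next take a sequence $\psi_n\to\id_\UD$ in $\Zn(\varphi)$ with $\psi_n\ne\id_\UD$. The associated parameters (translations $\beta_n$, or dilations $\lambda_n$) are nonzero, respectively $\ne1$, and tend to $0$, respectively~$1$. Since $\psi_n\ne\id$, after possibly passing to a subsequence and normalizing we can extract a direction $\beta_n/|\beta_n|\to\beta\ne0$ admissible for~$\Omega$.

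The key step, and the one we expect to be the main obstacle, is to show that $\tau_{t\beta}$ maps $h(\UD)$ into itself for all $t\ge0$, i.e.\ that the model domain is $\beta$-starlike. Our plan is as follows. Since $\Psi_n(h(\UD))\subset h(\UD)$, iterating gives $h(\UD)+k\beta_n\subset h(\UD)$ for all $k\in\N$. Choosing $k_n$ with $k_n\beta_n\to t\beta$ and using the openness of $h(\UD)$ together with the locally uniform convergence, we obtain $h(\UD)+t\beta\subset\overline{h(\UD)}$, and then interior points give $\subset h(\UD)$. The delicate points are twofold: that $h(\UD)$ is open, and that $\psi_n$ being univalent-lifted makes sense, which requires the univalence of $h$.

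Univalence should come out of the same argument. If $h(z_1)=h(z_2)$, then $h(\psi_n(z_1))=h(\psi_n(z_2))$ along the whole flow. Conversely, the flow $\phi_t:=h^{-1}\circ\tau_{t\beta}\circ h$ must first be defined, and here we would instead define $\phi_t=\lim\psi_n^{k_n}$, which lies in $\Zn(\varphi)$ by normality and closedness of the centralizer. We then verify the semigroup law from additivity of the parameters $k_n\beta_n$ and the injectivity of $\psi\mapsto\Psi$. Continuity in $t$ follows from the formula $h\circ\phi_t=\tau_{t\beta}\circ h$. Non-triviality holds since $\beta\ne0$.

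Finally, to prove that $\varphi$ is univalent, note that $\varphi$ commutes with each $\phi_t$. Because $\tau$ itself is in the flow direction, or more generally commutes with it, $\varphi$ is a function of the flow in the model: $\varphi$ lies in the semigroup generated, or $\varphi=\phi_{t_0}\circ g$ with $g$ a model automorphism. The maps $\phi_t$ of a continuous semigroup are univalent, so $\varphi$ is univalent as well.
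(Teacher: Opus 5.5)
Your proposal has genuine gaps. The first and most serious is in the case that actually needs new ideas: $\varphi$ parabolic of positive hyperbolic step.

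\textbf{The linearizer moves with $n$.} In the positive-step case the coordinate in which $\psi_n$ becomes a translation is $\beta_n\circ h_\varphi$, where $\beta_n:=\beta_{\varphi,\psi_n}$. This simultaneous linearization function depends on $n$. Unless $c_n:=c_{\varphi,\psi_n}\in\Real$, the semimodel morphism is $g_n(w)=w+F_n(e^{2\pi i w})$, which is not a translation of $\UH$. So there is no fixed domain $h(\UD)$ with $h(\UD)+kc_n\subset h(\UD)$ for all $n,k$, and letting $k_nc_n\to te^{i\theta}$ has nothing to act on.

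What is missing is compactness of the family $(\beta_n)$ with a non-degenerate limit. The paper obtains it in four steps:
\begin{enumerate}
\item It shows $g_n\to\id_\UH$.
\item It writes $e^{2\pi i g_n(w)}=f_n(e^{2\pi i w})$ with $f_n\to\id_\UD$, so that $g_n'\to1$ uniformly on each $\Pi_\varepsilon:=\{\Im w>\varepsilon\}$.
\item By Noshiro--Warschawski, $g_n$ is eventually univalent on $\Pi_\varepsilon$, hence so are its Koenigs function and $\beta_n=c_nh_{g_n}+b_n$.
\item From the normal family $\beta_n-\beta_n(i)$ it extracts a limit $\beta_*$, which is univalent by Hurwitz.
\end{enumerate}
On the model side the flow is then $\beta_*^{-1}\circ(\beta_*+te^{i\theta})$.

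\textbf{Escape to the boundary and the semigroup law.} Setting $\phi_t:=\lim\psi_n^{\circ k_n}$ ``by normality and closedness'' overlooks that such a limit may be a unimodular constant, and Lemma~\ref{LM_closed} does not apply to that. Boundedness of $h(\psi_n^{\circ k_n}(z))=h(z)+k_nc_n$ does not by itself keep $\psi_n^{\circ k_n}(z)$ in a compact subset of $\UD$. The paper gets the needed reverse estimate from univalence, via Lemma~\ref{LM_Growth} applied to $\beta_n$ on $\Pi_{1/2}$. Univalence of $\beta_*\circ h_\varphi$ is also what makes the limit independent of the subsequence and gives $\phi_{t+s}=\phi_t\circ\phi_s$.

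\textbf{Univalence cannot be assumed.} Univalence of $\varphi$, equivalently of $h_\varphi$, is part of what must be proved, and extracting only $h(\UD)+t\subset h(\UD)$ does not yield self-maps of $\UD$ without it. The paper handles this in two ways:
\begin{itemize}
\item In the parabolic case it passes to the univalent map $\widetilde\varphi=f^{-1}\circ\varphi\circ f$ on the fundamental domain $V_\varphi$. It then pulls the semigroup back through the closed injective homomorphism $\mathcal R_\varphi$ (Lemma~\ref{Thm:closedtilde}).
\item In the hyperbolic case it shows that the generators $(\psi_n-\id_\UD)/c_n$ converge to $1/h_\varphi'$. Hence $h_\varphi$ is the Koenigs function of a semigroup, and $\varphi=\phi_1$.
\end{itemize}

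\textbf{The closing univalence argument.} As stated it is wrong. A parabolic $\varphi$ need not belong to the semigroup, and ``$\varphi=\phi_{t_0}\circ g$'' has no basis. What works is this: commutation gives $G\circ\varphi=\varphi'G$ for the zero-free generator $G$. Then $(H\circ\varphi)'=H'$ for the univalent Koenigs function $H$ of $(\phi_t)$. Hence $\varphi=H^{-1}\circ(H+c)$ is univalent.
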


Some partial results of this kind were previously known. With an \textit{a priori} assumption that $\varphi$ is univalent, \cite[Theorem~4.4]{CDG-Centralizer} states the same conclusion under a stronger hypothesis that $\id_\UD$ is not isolated in
$$
 \mathcal Z(\varphi):=\Zn(\varphi)\cap\big\{\psi\in\Hol(\UD):\text{$\psi$ is univalent in~$\UD$}\big\}.
$$
Without univalence assumptions in the hypothesis, Theorem~\ref{Thm:embedding} has been proved in~\cite{CDG-zero-h.step} for the special (and in many respects simpler) case of parabolic self-maps~$\varphi$ of zero hyperbolic step. The present work completes the picture in the positive hyperbolic step setting (both for the parabolic and hyperbolic cases), further clarifying the relationship between the size of the centralizer and the embeddability phenomenon.

The main tools to obtain this result have been developed recently in \cite{CDG-Centralizer, CDG-zero-h.step, CDG-positive-h.step}, where the existence of holomorphic functions that simultaneously linearize any two commuting parabolic maps is established (see Section \ref{SEC:Simultaneous linearization}).

The property of~$\varphi$ asserted in the conclusion of Theorem~\ref{Thm:embedding} can be regarded as a kind of generalized or weaker version of embeddability, which explains the title of the paper. It is worth mentioning that if the function $\varphi$ is \emph{hyperbolic}, then it belongs to the semigroup~$(\phi_t)$ provided by Theorem~\ref{Thm:embedding}. That is, there exists $t_{0}>0$ such that $\varphi=\phi_{t_{0}}$ (see  \cite[Proposition~6.9]{CDG-Centralizer}, but also it is a consequence of our proof of Theorem~\ref{Thm:embedding}).
At the same time, in the \emph{parabolic case,} the existence of such a semigroup~$(\phi_t)$ inside the centralizer does not automatically imply that $\varphi$ itself is an element of that semigroup. Indeed, examples show that a parabolic self-map may commute with a non-trivial semigroup without being one of its iterates;  see, for example, \cite[Example~8.4]{CDG-Centralizer}.

In other words, the ``generalized'' and ``standard'' notions of embeddability of a non-elliptic self-map are equivalent only in the hyperbolic case, and the hypothesis that $\id_\UD$ is not isolated in~$\Zn(\varphi)$, which is obviously a necessary condition of embeddability, fails to be sufficient in general. Nevertheless, we can provide an additional sufficient condition ensuring full embeddability. Namely, in combination with \cite[Theorem~1.4]{CDG-Embedding} and~\cite[Corollary~2.13]{CDG-Centralizer}, Theorem~\ref{Thm:embedding} immediately implies the following result.

\begin{corollary}\label{Cor:embedding}
Let $\varphi \in \Hol(\D)$ be a parabolic self-map having at least one boundary fixed point other than its Denjoy\,--\,Wolff point. Suppose that $\id_{\D}$ is not isolated in $\Zn(\varphi)$. Then there exists a non-trivial continuous one-parameter semigroup ${(\phi_t)\subset\Zn(\varphi)}$ such that  $\varphi=\phi_{t_0}$ for some~${t_0>0}$ (and hence, by the very definition,  $\varphi$ is embeddable). Moreover, up to a reparametrization, $(\phi_t)$ is the only non-trivial continuous one-parameter semigroup contained in~$\Zn(\varphi)$.
\end{corollary}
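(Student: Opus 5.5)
The corollary follows by combining Theorem~\ref{Thm:embedding} with \cite[Theorem~1.4]{CDG-Embedding} and \cite[Corollary~2.13]{CDG-Centralizer}. First, a parabolic self-map is non-elliptic. So Theorem~\ref{Thm:embedding} applies to $\varphi$, and it gives two things: $\varphi$ is univalent, and $\Zn(\varphi)$ contains a non-trivial continuous one-parameter semigroup $(\phi_t)$. Thus $\varphi$ is a univalent parabolic self-map that commutes with every $\phi_t$.

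Second, I would bring in the extra boundary fixed point. Let $\sigma\ne\tau$ be a boundary fixed point of $\varphi$, where $\tau$ is the Denjoy\,--\,Wolff point. I expect \cite[Theorem~1.4]{CDG-Embedding} to assert that a univalent non-elliptic self-map with a boundary fixed point besides $\tau$, commuting with a non-trivial one-parameter semigroup, must be an element of that semigroup. The mechanism behind such a result is the following. Commuting maps share the Denjoy\,--\,Wolff point, and $\varphi$ induces an automorphism of the holomorphic model of $(\phi_t)$. For a parabolic semigroup this model is $(\C,z\mapsto z+it)$, $(\UH,z\mapsto z+t)$ or a strip model, and the automorphisms commuting with the translations are themselves translations, possibly composed with a map shifting in the transverse direction. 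The boundary fixed point $\sigma\neq\tau$ corresponds to a boundary ``end'' of the Koenigs domain, and it rules out the transverse shift, leaving only real translations, i.e.\ $\varphi=\phi_{t_0}$ with $t_0>0$ since $\varphi\neq\id_\UD$. Here one must check that the hypotheses of \cite[Theorem~1.4]{CDG-Embedding} match exactly what we have, namely univalence and commutation with a semigroup. The univalence is supplied by Theorem~\ref{Thm:embedding}, and this is why Theorem~\ref{Thm:embedding} is essential and cannot be replaced by the earlier result of \cite{CDG-Centralizer}, which assumed univalence.

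Third, uniqueness. Suppose $(\tilde\phi_t)\subset\Zn(\varphi)$ is another non-trivial semigroup. I would invoke \cite[Corollary~2.13]{CDG-Centralizer}, which presumably states that two non-trivial semigroups commuting with the same non-elliptic map $\varphi=\phi_{t_0}$ (or, equivalently, whose generators commute) coincide up to reparametrization. To see why this is plausible, apply the second step with $(\tilde\phi_t)$ in place of $(\phi_t)$ to get $\varphi=\tilde\phi_{s_0}$. Then $\tilde\phi_s$ commutes with $\phi_{t_0}$, so in the Koenigs model of $(\phi_t)$ it acts as a translation again constrained by $\sigma$. By continuity in $s$, the generators are then proportional, $\tilde G=cG$ with $c>0$, and so $\tilde\phi_t=\phi_{ct}$.

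The main obstacle is the bookkeeping in the second step: matching the hypotheses of the cited embedding theorem. In particular, one must ensure that the boundary fixed point $\sigma$ of $\varphi$ is also seen by the semigroup, say as a boundary fixed point or a boundary regular fixed point of $(\phi_t)$, so that the rigidity argument applies.
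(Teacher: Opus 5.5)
Your proposal follows the paper's argument: Theorem~\ref{Thm:embedding} supplies a non-trivial semigroup $(\phi_t)\subset\Zn(\varphi)$, \cite[Theorem~1.4]{CDG-Embedding} then gives $\varphi=\phi_{t_0}$ (that theorem needs no univalence hypothesis, so what Theorem~\ref{Thm:embedding} really contributes is the semigroup, not univalence), and for uniqueness the key step is the one you found: apply that theorem again to any other $(\tilde\phi_t)\subset\Zn(\varphi)$ to get $\varphi=\tilde\phi_{s_0}$. Be careful, though: your guessed general form of \cite[Corollary~2.13]{CDG-Centralizer} is false, since commuting semigroups in $\Zn(\varphi)$ need not be reparametrizations of each other (cf.\ Example~\ref{EX_example}), and the last step uses neither $\sigma$ nor continuity in $s$, only uniqueness of the embedding of a non-elliptic self-map: uniqueness of the holomorphic model forces $\phi_t=h_\varphi^{-1}\circ(h_\varphi+t/t_0)$ and $\tilde\phi_s=h_\varphi^{-1}\circ(h_\varphi+s/s_0)$, so $\tilde\phi_s=\phi_{st_0/s_0}$.
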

In Section~\ref{S_real}, we obtain another result of this kind, Corollary~\ref{CR_another-add-condition}. It states that if under the hypothesis of Theorem~\ref{Thm:embedding} we additionally suppose that ${\varphi\big((-1,1)\big)\subset(-1,1)}$, then $\varphi$ is embeddable.

The main objective of this paper is to prove Theorem~\ref{Thm:embedding}. The proof itself is given in Section~\ref{SEC:Proof}, which is preceded by two preparatory sections. In Section~\ref{S_prelim}, we introduce most of the terminology we use and provide the necessary basics of the theory of holomorphic models for self-maps of the unit disc. Next, in Section~\ref{SEC:Simultaneous linearization}, we collect the results on the simultaneous linearization of commuting holomorphic self-maps serving as the main tool in our proof of Theorem~\ref{Thm:embedding}.

In Section~\ref{S_real}, we state and prove the sufficient condition for embeddability mentioned above.
Finally, in the concluding Section~\ref{S_concluding}, we make a few observations related to the main result. In particular, we characterize infinitesimal generators of one-parameter semigroups contained in the centralizer of a given non-elliptic self-map and show that they form a topologically closed convex cone. We also notice that similarly to ``standard'' embeddability, the presence of a non-trivial continuous one-parameter semigroup in the centralizer~$\Zn(\varphi)$ affects the boundary behaviour of~$\varphi$, see Remark~\ref{RM_anglim}.

\section{Preliminaries and auxiliary results}\label{S_prelim}
Below we introduce some notation and basic theory used further in the paper. For  more details and for the proofs of the previously known results presented in this section without proof, we refer the interested readers to the recent monograph~\cite{Abate2}.

\addtocontents{toc}{\SkipTocEntry}
\subsection{Notation}\label{Notation}
As usual, we denote the unit disc by
${\UD:=\{z\in\C:|z|<1\}}$,
and we write $\UH:={\{w\in\C:\Im w>0\}}$ for the upper half-plane and $\UHR:={\{w\in\C:\Re w>0\}}$ for the right half-plane.

Furthermore, denote by $\Hol(D,E)$ the class of all holomorphic mappings of a domain $D\subset\C$ into a set $E\subset\C$. As usual, we endow $\Hol(D,E)$  with the topology of locally uniform convergence. In case $E=D$, we will write  $\Hol(D)$  instead of $\Hol(D,D)$.

For a self-map $\varphi:D\to D$ of a domain $D\subset\C$ and ${n\in\Natural}$, we denote by $\varphi^{\circ n}$ the $n$-th iterate of~$\varphi$, and let $\varphi^{\circ0}:=\id_D$, the identity map in $D$.

\addtocontents{toc}{\SkipTocEntry}
\subsection{Holomorphic self-maps of the unit disc}\label{SS_PRE-holo-selfmaps}
The study of the dynamics of a generic holomorphic self-map $\varphi$ of the unit disc $\mathbb{D}$, different from the identity map,  is a classical and well-established branch of Complex Analysis.
The central result in the area is the Denjoy\,--\,Wolff Theorem, which states that if $\varphi$ is different from an elliptic automorphism (i.e. not an automorphism of~$\UD$ possessing a fixed point in~$\UD$), then the sequence of the iterates $(\varphi^{\circ n})$ converges locally uniformly in~$\UD$ to a certain point~${\tau\in\overline{\mathbb{D}}}$.  This point  is called the \dff{Denjoy\,--\,Wolff point\/} of $\varphi$. If it belongs to~$\UD$, then it is the unique fixed point of~$\varphi$ in~$\UD$. Otherwise, i.e. when ${\tau\in\UC}$, the self-map $\varphi$ does not possess any fixed point in~$\UD$, but $\tau$ is still in a sense a ``special'' fixed point: it is the unique boundary fixed point at which the angular derivative $\varphi'(\tau)$ is finite and belongs to $(0,1]$. (For the definition and basic facts on boundary fixed points of holomorphic self-maps, see e.g. \cite[\S1.9]{BCD-Book}.)

According to the position of the Denjoy\,--\,Wolff point~$\tau$ and to the value of $\varphi'(\tau)$, known as the \dff{multiplier} of $\varphi$ at~$\tau$, the holomorphic self-maps $\varphi\in\Hol(\UD)$ different from elliptic automorphisms are divided into three categories. Namely, $\varphi$ is called:
\begin{itemize}
\item[(a)] \dff{elliptic\/} if $\tau\in\UD$,

\item[(b)] \dff{hyperbolic\/} if $\tau\in \partial \D$ and $\varphi'(\tau )<1$, and

\item[(c)] \dff{parabolic\/} if $\tau
\in \partial \D$ and $\varphi'(\tau )=1$.
\end{itemize}
 All elliptic automorphisms of~$\UD$, along with the identity mapping~$\id_\UD$,  are conventionally included in the category~(a) of elliptic self-maps.
Similarly, for an elliptic automorphism different from~$\id_\UD$, its Denjoy\,--\,Wolff point is defined to be its unique fixed point in~$\UD$.

Parabolic self-maps can have very different dynamical properties depending on the so-called \textit{hyperbolic step}.
Let $\varphi\in\Hol(\UD)$ be non-elliptic. Thanks to the Schwarz\,--\,Pick Lemma, for the orbit $\big(z_n\big):=\big(\varphi^{\circ n}(z_0)\big)$ of any point ${z_0\in\UD}$, there exists a finite limit $q(z_0):=\lim_{n\to+\infty} \hd_\D(z_{n},z_{n+1})$, where $\hd_\UD(\cdot,\cdot)$ stands for the hyperbolic distance in~$\UD$.
 It is known, see e.g. \cite[Corollary\,4.6.9]{Abate2}, that  either $q(z_0)>0$ for all~${z_0\in\UD}$, or $q\equiv0$ in~$\UD$.  The self-map~$\varphi$ is said to be of \dff{positive} or of \dff{zero hyperbolic step} depending on whether the former or the latter alternative occurs.
If $\varphi$ is
hyperbolic, then it is always of positive hyperbolic step. However, there exist parabolic self-maps of zero as well as of positive hyperbolic step.

\addtocontents{toc}{\SkipTocEntry}
\subsection{Commuting holomorphic self-maps}\label{SS_commuting}
It is clear that if two  holomorphic self-maps $\varphi,\psi\in\Hol(\UD)\setminus\{\id_\UD\}$ commute, i.e. ${\varphi\circ\psi}={\psi\circ\varphi}$, and if one of them is elliptic, then the other is also elliptic and they share the Denjoy\,--\,Wolff point. The situation is not so evident when we consider non-elliptic self-maps. In 1973, Behan \cite{Behan}, see also \cite[Section~4.10]{Abate2}, proved that if $\varphi$ and~$\psi$ are commuting non-elliptic self-maps of~$\UD$ and $\varphi$ is not a hyperbolic automorphism, then  $\varphi$ and~$\psi$ share the  Denjoy\,--\,Wolff point.

Later, Cowen~\cite[Corollary~4.1]{Cowen-comm} proved that if $\varphi$ and $\psi$ are two non-elliptic commuting holomorphic self-maps of~$\D$ and if $\varphi$  is hyperbolic, then $\psi$ is also hyperbolic (and thus, if $\varphi$ is parabolic, then $\psi$ is parabolic). In the special case $\varphi$ is a hyperbolic automorphism, by an old result of Heins \cite[Lemma~2.1]{Heins}, $\psi\in\Hol(\UD)\setminus\{\id_\UD\}$ commutes with~$\varphi$ if and only if $\psi$ is a hyperbolic automorphism itself with the same fixed points as~$\varphi$.

It is worth mentioning that the centralizer $\Zn(\varphi):={\{\psi \in \Hol (\D): \, \psi\circ\varphi=\varphi\circ \psi\}}$ of any holomorphic self-map ${\varphi:\UD\to\UD}$ is a unital topological semigroup w.r.t. composition and topology of locally uniform convergence in~$\UD$. Moreover, it is topologically closed:
\begin{lemma}\label{LM_closed}
 For any $\varphi\in\Hol(\UD)$, $\Zn(\varphi)$ is relatively closed in~$\Hol(\UD)$.
\end{lemma}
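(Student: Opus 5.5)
The plan is to show that the complement $\Hol(\UD)\setminus\Zn(\varphi)$ is open, or equivalently that $\Zn(\varphi)$ contains the limits of its convergent sequences. Since $\Hol(\UD)$ with the topology of locally uniform convergence is metrizable, sequential closedness is enough. So let $(\psi_n)\subset\Zn(\varphi)$ converge locally uniformly in~$\UD$ to some $\psi\in\Hol(\UD)$. We must check that $\psi\circ\varphi=\varphi\circ\psi$.

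The first identity $\psi_n\circ\varphi\to\psi\circ\varphi$ is straightforward. For a compact $K\subset\UD$, the set $\varphi(K)$ is compact in~$\UD$, so $\psi_n\to\psi$ uniformly on $\varphi(K)$. This gives $\psi_n\circ\varphi\to\psi\circ\varphi$ uniformly on~$K$.

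For the other side we need $\varphi\circ\psi_n\to\varphi\circ\psi$. Fix a compact $K$. The set $\psi(K)$ is a compact subset of $\UD$, so it lies in a closed disc $\{|z|\le r\}$ with $r<1$. Take $r<r'<1$. By uniform convergence on $K$, for all large $n$ we have $\psi_n(K)\subset\{|z|\le r'\}$. The map $\varphi$ is uniformly continuous on this closed disc, so $\varphi\circ\psi_n\to\varphi\circ\psi$ uniformly on~$K$.

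Passing to the limit in $\psi_n\circ\varphi=\varphi\circ\psi_n$ then gives $\psi\circ\varphi=\varphi\circ\psi$, that is, $\psi\in\Zn(\varphi)$. The only point needing care is the second convergence: the images $\psi_n(K)$ must stay in a fixed compact subset of $\UD$. This holds precisely because the limit $\psi$ is assumed to map into~$\UD$, which is what "relatively closed in $\Hol(\UD)$" means. The argument would fail for limits that take boundary values, which is why closedness is only relative.
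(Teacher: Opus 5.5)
Your proof is correct and follows the same route as the paper: take $(\psi_n)\subset\Zn(\varphi)$ converging to some $\psi\in\Hol(\UD)$ and pass to the limit in $\psi_n\circ\varphi=\varphi\circ\psi_n$. The paper does this in a single line, while you also justify the two convergences $\psi_n\circ\varphi\to\psi\circ\varphi$ and $\varphi\circ\psi_n\to\varphi\circ\psi$ (in particular that $\psi_n(K)$ stays in a fixed compact subset of $\UD$), which the paper leaves implicit.
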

\begin{proof}
Let $(\psi_n)\subset\Zn(\varphi)$ be a sequence converging to some ${\psi\in\Hol(\UD)}$. Passing to the limit as~${n\to+\infty}$ in the equality ${\psi_n\circ\varphi}={\varphi\circ\psi_n}$, we see that, as desired, ${\psi\in\Zn(\varphi)}$.
\end{proof}

\addtocontents{toc}{\SkipTocEntry}
\subsection{Holomorphic models for holomorphic self-maps}\label{Sec:modelos}
An indispensable role in our study is played by the concept of a holomorphic model, which goes back to Pommerenke~\cite{Pom79}, Baker and Pommerenke~\cite{BakerPommerenke}, and Cowen~\cite{Cowen}. The terminology we use is mainly borrowed from~\cite{Canonicalmodel}.

\begin{definition}\label{DF_absorbing} Let  $\varphi\in\Hol(\D)$. A domain $V \subset \D$ is \dff{invariant} for $\varphi$ (or $\varphi$-invariant) if $\varphi(V)\subset V$; it is \dff{absorbing} for $\varphi$ (or $\varphi$-absorbing) if it is $\varphi$-invariant and
$$
\D=\bigcup_{n\in\Natural}\big(\varphi^{\circ n}\big)^{-1}(V).
$$
In other words, a $\varphi$-invariant domain is $\varphi$-absorbing if it eventually contains the orbit of any point of $\D$.
A simply connected domain $V\subset \D$ is called a \dff{fundamental domain} of $\varphi$ if it is $\varphi$-absorbing and $\varphi:V\to V$ is univalent.
\end{definition}

\begin{definition}\label{DF_holomorphic-model} A \dff{holomorphic semimodel} of $\varphi\in\Hol(\D)$  is a triple $\mathcal M:=(S,h,\alpha)$, where $S$ is a Riemann surface, $\alpha$ is an automorphism of $S$, and $h$ is a holomorphic map from $\UD$ into $S$ satisfying the following two conditions:
	\begin{enumerate}[left=2.5em]
		\item[(HM1)] $h\circ \varphi=\alpha \circ h$  {}~and
		\item[(HM2)] $S\,=\,\bigcup_{n\geq0}  \big(\alpha^{\circ n}\big)^{-1}(h(\D))$.
	\end{enumerate}
		A \dff{holomorphic model} of $\varphi\in \Hol (\D)$ is a semimodel $\mathcal M:=(S,h,\alpha)$ for which
	\begin{enumerate}[left=2.5em]
		\item[(HM3)] there exists a $\varphi$-absorbing domain $V\subset\UD$ in which $h$ is injective.
	\end{enumerate}
The Riemann surface $S$ is called the \dff{base space}, and the map $h$ is called the \dff{intertwining map} of the holomorphic model~$\mathcal M$.
\end{definition}

Given a holomorphic model $\mathcal M$ of~$\varphi$, every holomorphic semimodel of~$\varphi$ factorises via the model~$\mathcal M$ in the following sense.

\begin{lemma}[{\cite[Lemma 3.5.8 and Remark 3.5.6]{Abate2}}]\label{Le:morphism}
Assume that $\varphi\in\Hol(\UD)$ admits a holomorphic model $\mathcal M:=(S,h,\alpha)$.
If $(S_1,h_1,\alpha_1)$ is another holomorphic semimodel for~$\varphi$, then there exists  a unique surjective holomorphic map $\beta:S\to S_1$ such that ${h_1=\beta\circ h}$ and ${\beta\circ\alpha}={\alpha_1\circ\beta}$.
\end{lemma}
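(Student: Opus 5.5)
We must prove Lemma~\ref{Le:morphism}: given a holomorphic model $\mathcal M=(S,h,\alpha)$ of~$\varphi$ and a semimodel $(S_1,h_1,\alpha_1)$, there is a unique surjective holomorphic $\beta:S\to S_1$ with $h_1=\beta\circ h$ and $\beta\circ\alpha=\alpha_1\circ\beta$.

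The plan is to build $\beta$ first on $h(V)$, where $V$ is the $\varphi$-absorbing domain from (HM3) on which $h$ is injective. On $h(V)$ we set $\beta_0:=h_1\circ(h|_V)^{-1}$. This map is holomorphic, because $h|_V$ is an injective holomorphic map onto the open set $h(V)\subset S$. Since $V$ is $\varphi$-invariant and both (HM1) relations hold, we get $\beta_0\circ\alpha=\alpha_1\circ\beta_0$ on $h(V)$ wherever this makes sense. Indeed, for $w=h(z)$ with $z\in V$ we have $\alpha(w)=h(\varphi(z))$ with $\varphi(z)\in V$, so
$$\beta_0(\alpha(w))=h_1(\varphi(z))=\alpha_1(h_1(z))=\alpha_1(\beta_0(w)).$$

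Next we extend $\beta_0$ to all of $S$. Since $V$ is absorbing, $\UD=\bigcup_n\varphi^{-n}(V)$, so $h(\UD)\subset\bigcup_n\alpha^{-n}(h(V))$. Combining this with (HM2), $S=\bigcup_n\alpha^{-n}(h(V))$, which is an increasing union of open sets because $\alpha(h(V))\subset h(V)$. On $\alpha^{-n}(h(V))$ we define $\beta:=\alpha_1^{-n}\circ\beta_0\circ\alpha^{n}$, which is holomorphic since $\alpha_1$ is an automorphism. The intertwining relation for $\beta_0$ shows that these definitions agree on overlaps, so $\beta$ is a well-defined holomorphic map on~$S$ satisfying $\beta\circ\alpha=\alpha_1\circ\beta$.

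To check $h_1=\beta\circ h$, take $z\in\UD$ and choose $n$ with $\varphi^n(z)\in V$. Then
$$\beta(h(z))=\alpha_1^{-n}\beta_0(h(\varphi^n z))=\alpha_1^{-n}h_1(\varphi^n z)=h_1(z).$$
Surjectivity follows from (HM2) for $S_1$: any $w_1\in S_1$ equals $\alpha_1^{-n}(h_1(z))=\alpha_1^{-n}\beta(h(z))=\beta(\alpha^{-n}h(z))$. Uniqueness holds because any such $\beta$ is forced on $h(\UD)$, hence on $h(V)$, and then on every $\alpha^{-n}(h(V))$ by the intertwining relation.

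The main care point is the consistency check on overlaps. For $w\in\alpha^{-n}(h(V))$ with $m\ge n$, we have $\alpha^m(w)=\alpha^{m-n}(\alpha^n w)$, and iterating the relation for $\beta_0$ gives $\alpha_1^{-m}\beta_0\alpha^m w=\alpha_1^{-n}\beta_0\alpha^n w$.
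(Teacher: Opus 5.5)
Your proof is correct and complete. It defines $\beta=h_1\circ(h|_V)^{-1}$ on $h(V)$ and propagates it to the increasing exhaustion $\alpha^{-n}(h(V))$ of $S$ via $\alpha_1^{-n}\circ\beta\circ\alpha^{n}$; the identity $h_1=\beta\circ h$, surjectivity (from (HM2) for $S_1$) and uniqueness (from the intertwining relation) then follow as you show. The paper gives no proof of its own, since the lemma is quoted from Abate's monograph, and your argument is essentially the standard construction of the morphism from a model to a semimodel used there.
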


Every non-elliptic self-map $\varphi\in\Hol(\UD)$ admits an essentially unique holomorphic model. More precisely, the following fundamental theorem holds.

\begin{theorem}[\protect{\cite[Corollary~4.5.5]{Abate2}}] \label{Thm:uniqness} Every
non-elliptic self-map $\varphi\in\Hol(\UD)$ admits a holomorphic model. Moreover such a model is unique up to a model isomorphism; i.e., if $(S_1,h_1,\alpha_1)$ and $(S_2,h_2,\alpha_2)$ are holomorphic models for $\varphi$, then there exists a biholomorphic map $\beta$ of~$S_1$ onto~$S_2$ such that
	$$
	h_2=\beta\circ h_1,\quad \alpha_2=\beta\circ\alpha_1\circ \beta^{-1}.
	$$
\end{theorem}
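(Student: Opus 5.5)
The statement to be proved is Theorem~\ref{Thm:uniqness}: existence of a holomorphic model for a non-elliptic $\varphi$, and its uniqueness up to model isomorphism. Existence is the deep part, and the paper attributes it to Pommerenke, Baker--Pommerenke and Cowen. The plan is to take existence from the canonical construction and derive uniqueness from the factorisation property in Lemma~\ref{Le:morphism}.

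\emph{Existence.} Take Cowen's fundamental domain $V$ for $\varphi$, i.e.\ a simply connected $\varphi$-absorbing domain on which $\varphi$ is univalent. Build the abstract Riemann surface
\[
S:=\Big(\bigsqcup_{n\ge0} V\times\{n\}\Big)\Big/\!\sim,\qquad (z,n)\sim(w,m)\iff \varphi^{\circ(k-n)}(z)=\varphi^{\circ(k-m)}(w)\ \text{for some }k\ge n,m .
\]
On $S$, define $\alpha[(z,n)]:=[(\varphi(z),n)]$, whose inverse is $[(z,n)]\mapsto[(z,n+1)]$.

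For $h$, set $h(z):=[(\varphi^{\circ k}(z),k)]$, where $k$ is chosen with $\varphi^{\circ k}(z)\in V$. This is independent of $k$ and holomorphic.

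Univalence of $\varphi|_V$ gives injectivity of $h$ on $V$, which is (HM3). (HM1) follows from the definitions. (HM2) holds since every class $[(z,n)]$ equals $\alpha^{-n}(h(z))$.

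\emph{Uniqueness.} Let $(S_1,h_1,\alpha_1)$ and $(S_2,h_2,\alpha_2)$ be two holomorphic models. Since each is in particular a semimodel, Lemma~\ref{Le:morphism} gives surjective holomorphic maps
\[
\beta:S_1\to S_2,\quad h_2=\beta\circ h_1,\ \ \beta\circ\alpha_1=\alpha_2\circ\beta,\qquad \gamma:S_2\to S_1,\quad h_1=\gamma\circ h_2,\ \ \gamma\circ\alpha_2=\alpha_1\circ\gamma .
\]
The composition $\gamma\circ\beta$ then satisfies $h_1=(\gamma\circ\beta)\circ h_1$ and commutes with $\alpha_1$. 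The identity of $S_1$ satisfies the same two relations, so by the uniqueness clause of Lemma~\ref{Le:morphism} applied to $(S_1,h_1,\alpha_1)$ against itself, $\gamma\circ\beta=\id_{S_1}$. Symmetrically $\beta\circ\gamma=\id_{S_2}$. Hence $\beta$ is biholomorphic, and the intertwining relation gives $\alpha_2=\beta\circ\alpha_1\circ\beta^{-1}$.

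\emph{Main obstacle.} The hard step is producing the fundamental domain $V$, that is, a simply connected absorbing domain on which $\varphi$ is injective. The approach I would try first is Cowen's: for non-elliptic $\varphi$, $\varphi^{\circ n}$ eventually lies in a region where $\varphi'\neq0$ and $\varphi$ is injective near the orbit. One takes a small hyperbolic disc along an orbit and pushes it forward to get an absorbing domain.

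Making this rigorous needs the Denjoy--Wolff convergence together with the hyperbolic-distance estimates near the boundary. In a paper of this kind the step would simply be quoted from \cite{Abate2}.
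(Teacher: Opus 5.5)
Your proposal is correct and follows the standard route behind the result the paper quotes. The paper gives no proof of its own, only the citation \cite[Corollary~4.5.5]{Abate2}, with existence resting on absorbing domains as in \cite[Theorem~3.5.10]{Abate2} and uniqueness on the universal property in Lemma~\ref{Le:morphism}. Your direct-limit construction over a fundamental domain does verify (HM1)--(HM3). Applying Lemma~\ref{Le:morphism} in both directions together with its uniqueness clause does give $\gamma\circ\beta=\id_{S_1}$ and $\beta\circ\gamma=\id_{S_2}$.

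The one thing to fix is your heuristic for the fundamental domain. Pushing forward a single hyperbolic disc $\Delta$ of fixed radius does not in general give an absorbing set: for hyperbolic $\varphi$, no $z$ with $\Im h_\varphi(z)\notin\Im h_\varphi(\Delta)$ ever enters $\bigcup_n\varphi^{\circ n}(\Delta)$. Since you, like the paper, quote the existence of such a domain from the literature (Cowen~\cite{Cowen}; see also \cite[Theorem~5.1]{CDP}, used in Section~\ref{SEC:Proof} for parabolic maps), the proof is unaffected.
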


The proof of the existence of holomorphic models depends strongly on the existence of absorbing sets (see, e.g., \cite[Theorem 3.5.10]{Abate2}).
In the next result we summarize what we need about the models for self-maps of the unit disc.
The type of a self-map is reflected in, and actually can be fully determined from the kind of holomorphic model $\varphi$ admits.  For an open interval $I\subset\Real$, we define
$$
 S_I:=\Real\times I=\{x+iy:x\in\Real,\,y\in I\}.
$$

\begin{theorem}[\protect{\cite{Cowen}, see also \cite[Theorem 4.6.8]{Abate2}}]\label{Thm:model}
Let $\varphi\in\Hol(\UD)$. The following statements hold.
\begin{ourlist}
	\item\label{IT_HM-hyp} $\varphi$ is a hyperbolic self-map with multiplier $\lambda\in(0,1)$ if and only if $\varphi$ admits a holomorphic model of the form $\mathcal M_\varphi:=(S_{I},h,z\mapsto z+1)$, where $I=(a,b)$ is a bounded open interval of length ${b-a=\pi/|\log\lambda|}$.
	\item\label{IT_HM-para-PHS} $\varphi$ is a parabolic self-map of positive hyperbolic step if and only if $\varphi$ admits a holomorphic model of the form ${\mathcal M_\varphi:=(S_{I},h,z\mapsto z+1)}$, where $I$ is an open unbounded interval different from the whole~$\Real$.
	\item\label{IT_HM-para-0HS} $\varphi$ is a parabolic self-map of zero hyperbolic step if and only if $\varphi$ admits a holomorphic model of the form ${\mathcal M_\varphi:=(\C,h,z\mapsto z+1)}$.	
\end{ourlist}	
\end{theorem}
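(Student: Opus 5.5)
The plan is to derive the classification from the existence and uniqueness of holomorphic models (Theorem~\ref{Thm:uniqness}) and the uniformization theorem, and then to identify which normal form occurs using hyperbolic-step and multiplier invariants.

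\emph{Normal forms of the base space.} Fix a model $(S,h,\alpha)$ of a non-elliptic $\varphi$ with an absorbing domain $V$ on which $h$ is injective. Replacing $V$ by a simply connected absorbing subdomain if necessary, $h(V)$ is simply connected and $\alpha(h(V))\subset h(V)$. By (HM2), $S=\bigcup_n \alpha^{-n}(h(V))$ is an increasing union of simply connected domains, so $S$ is simply connected and non-compact. By uniformization, $S$ is biholomorphic to $\C$ or to $\UH$. The automorphism $\alpha$ has no fixed point: a fixed point $w=h(z)$ with $z\in V$ would force the orbit of $z$ under $\varphi$ to stay in the compact set $h|_V^{-1}(\{w\})$, contradicting $\varphi^{\circ n}\to\tau\in\UC$. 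A fixed-point-free automorphism of $\C$ is a translation, which we conjugate to $z\mapsto z+1$. A fixed-point-free automorphism of $\UH$ is either parabolic, conjugate to $z\mapsto z+1$ on $\UH=S_{(0,+\infty)}$ (or on $S_{(-\infty,0)}$), or hyperbolic, conjugate to $w\mapsto \mu w$ with $\mu>1$. In the hyperbolic case, the map $w\mapsto \log w/\log\mu$ sends $\UH$ onto a strip of width $\pi/\log\mu$ and conjugates $\alpha$ to $z\mapsto z+1$. Thus every non-elliptic $\varphi$ has a model of one of the three forms listed, and by Theorem~\ref{Thm:uniqness} the form is an invariant of~$\varphi$. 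Consequently it suffices to prove the three ``only if'' directions.

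\emph{Distinguishing zero and positive step.} I would show that $q(z_0)=\lim_n\rho_\UD(z_n,z_{n+1})$ equals $\rho_S(h(z_0),\alpha(h(z_0)))$. The inequality $q\ge\rho_S(\cdot)$ follows from the Schwarz--Pick lemma applied to $h$. For the reverse inequality, note that $\rho_S$ restricted to $h(\varphi^{\circ n}(V))$ is the decreasing limit of the distances $\rho_{\alpha^{-m}(h(V))}$, and these distances are controlled through the $\varphi$-equivariance of $h$. On $\C$ the hyperbolic distance vanishes, while on a strip or half-plane it is positive for the translation $z\mapsto z+1$. Hence zero hyperbolic step corresponds exactly to $S=\C$, which gives~\ref{IT_HM-para-0HS}. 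It remains to separate bounded from unbounded $I$ in the positive-step case.

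\emph{Identifying hyperbolicity and the multiplier.} For hyperbolic $\varphi$ with multiplier $\lambda$, Valiron's theorem gives $\sigma\in\Hol(\UD,\UH)$ with $\sigma\circ\varphi=\lambda^{-1}\sigma$. Taking $\log\sigma/|\log\lambda|$ and passing to the image domain yields a semimodel $(S_{(0,\pi/|\log\lambda|)},\tilde\sigma,z\mapsto z+1)$. Lemma~\ref{Le:morphism} then provides a surjective holomorphic map $\beta:S_I\to S_{(0,\pi/|\log\lambda|)}$ commuting with $z\mapsto z+1$. The main obstacle is to show that $\beta$ forces $I$ to be bounded with the correct width. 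My first attempt would be to pass to the quotient annuli $S_I/\langle z\mapsto z+1\rangle$: $\beta$ descends to a surjective holomorphic map between these annuli that is homotopically nontrivial (it induces the identity on $\pi_1$). The moduli comparison for such maps (Schwarz--Pick for annuli) then gives $|I|\le\pi/|\log\lambda|$, and in particular $I$ is bounded. For the reverse inequality, I would use the model itself: if $|I|<\infty$, then $h$ composed with the map back to $\UH$ satisfies a Valiron-type functional equation, and a Julia-lemma argument at $\tau$ recovers $\varphi'(\tau)=e^{-\pi/|I|}$, which gives equality. Finally, if $\varphi$ is parabolic of positive step, the model cannot be a bounded strip, since that would make $\varphi$ hyperbolic by the computation just described. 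Hence $I$ is unbounded, and $I\neq\Real$ by the previous step, which proves~\ref{IT_HM-hyp} and~\ref{IT_HM-para-PHS}.
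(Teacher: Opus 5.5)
The paper does not prove this theorem; it quotes it from Cowen and from Abate's book, so your plan has to stand on its own. Most of it does:
\begin{itemize}
\item uniformization of $S$ together with the classification of fixed-point-free automorphisms;
\item the reduction to the ``only if'' directions via Theorem~\ref{Thm:uniqness}. You should add that an elliptic $\varphi$ has a fixed point $z_0\in\UD$, which would give $h(z_0)=h(z_0)+1$;
\item the Valiron semimodel combined with Lemma~\ref{Le:morphism} and the comparison of translation lengths of $w\mapsto w+1$ (equivalently, moduli of the quotient annuli). This correctly shows that $I$ is bounded with $|I|\le\pi/|\log\lambda|$. The base of that semimodel is $\bigcup_n(\tilde\sigma(\UD)-n)$, possibly smaller than the strip, so $\beta$ need not be onto the strip; this is harmless.
\end{itemize}

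\textbf{The gap is in the final step.} Both the exact width in (1) and the exclusion of bounded strips in (2) rest on it. Normalize $I=(0,|I|)$ and set $F:=e^{\pi h/|I|}\in\Hol(\UD,\UH)$, so that $F\circ\varphi=\mu F$ with $\mu=e^{\pi/|I|}$.
\begin{itemize}
\item This equation alone does not determine $\mu$. If $\sigma$ is Valiron's function, then $\sigma^s$ with $0<s<1$ solves the same type of equation with $\mu=\lambda^{-s}<1/\lambda$. So no argument at $\tau$ based only on the equation can ``recover'' $\varphi'(\tau)=1/\mu$.
\item Julia's inequality points the wrong way for the bound you need. It gives $1-|z_n|\le C\lambda^n$, i.e.\ a \emph{lower} bound $\rho_\UD(z_0,z_n)\ge n\log(1/\lambda)-C$.
\item Schwarz--Pick also gives only a lower bound, $\rho_\UD(z_0,z_n)\ge\rho_{S_I}(h(z_0),h(z_0)+n)\ge n\pi/|I|$, using the translation length in the strip with curvature $-1$ throughout. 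Two lower bounds cannot be compared.
\end{itemize}

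\textbf{How to close it.} What is missing is an \emph{upper} bound on $\rho_\UD(z_0,z_n)$. It comes from the Julia--Carath\'eodory characterization $\liminf_{z\to\tau}(1-|\varphi(z)|)/(1-|z|)=\varphi'(\tau)=\lambda$. This gives $1-|z_{n+1}|\ge(\lambda-\varepsilon)(1-|z_n|)$ for large $n$, hence $\rho_\UD(z_0,z_n)\le n\log\frac{1}{\lambda-\varepsilon}+C_\varepsilon$.
\begin{itemize}
\item Comparing with the Schwarz--Pick lower bound gives $\pi/|I|\le\log(1/\lambda)$, i.e.\ $|I|\ge\pi/|\log\lambda|$. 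Together with your annulus bound this yields equality.
\item For parabolic $\varphi$ ($\lambda=1$) the same comparison gives $\pi/|I|\le0$, so a bounded strip is impossible.
\end{itemize}
This is the divergence-rate argument. With it in place of the ``Julia-lemma argument'', the proof closes.

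\textbf{Two smaller steps should also be written out.}
\begin{itemize}
\item You have not justified that $V$ contains a simply connected absorbing subdomain, but you do not need one. A loop in $S$ lies in some $\alpha^{-n}(h(V))$. Its pull-back to $V$ is null-homotopic in $\UD$. Since compact sets are eventually absorbed, $\varphi^{\circ m}$ maps that null-homotopy into $V$ for large $m$.
\item The reverse step inequality is needed to exclude $S=\C$ in the positive-step case. It follows from $\rho_{\alpha^{-m}(h(V))}(h(z_0),h(z_1))=\rho_V(z_m,z_{m+1})\ge\rho_\UD(z_m,z_{m+1})\ge q(z_0)$ for $m$ large, letting $m\to\infty$ and using that these distances decrease to $\rho_S$.
\end{itemize}
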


\begin{remark}\label{RM_normalization}
In the above theorem, we may assume that:
\begin{itemize}
 \item[-]in cases~\ref{IT_HM-hyp} and~\ref{IT_HM-para-0HS}, $h(0)=0$;
 \item[-]in case~\ref{IT_HM-para-PHS}, $\Re h(0)=0$ and ${S_I=S_{(0,+\infty)}=\UH}$ or ${S_I=S_{(-\infty,0)}=-\UH}$.
\end{itemize}
Using the uniqueness part of Theorem~\ref{Thm:uniqness}, one can show (see e.g. \cite[Corollary 4.6.12]{Abate2} for details) that the above assumptions play the role of a normalization under which the holomorphic model $\mathcal M_\varphi$ for a given non-elliptic self-map~$\varphi$ is unique. Note that in case~\ref{IT_HM-para-PHS}, we can actually achieve ${S_I=\UH}$ by replacing, if necessary, $\varphi$ with $z\mapsto\overline{\varphi(\bar z)}$.
\end{remark}

\begin{definition}\label{DF_canonical}
 The unique holomorphic model $\mathcal M_\varphi$ of a non-elliptic self-map  $\varphi\in\Hol(\UD)$ defined in Theorem~\ref{Thm:model} and normalized as in Remark~\ref{RM_normalization} is called \dff{the canonical (holomorphic) model} for~$\varphi$. The intertwining map~$h$ of the canonical model~$\mathcal M_\varphi$ is called the \dff{Koenigs function}, and we denote it, from now on, by~$h_{\varphi}$. Finally, ${h_{\varphi}(\UD)}$ is called the \dff{Koenigs domain} of~$\varphi$  and we denote by  $S_\varphi$ the base space of~$\mathcal M_\varphi$.
\end{definition}

\begin{remark}\label{RM_Pommerenke}
It is known that the Koenigs function~$h_\varphi$ of any hyperbolic self-map~$\varphi\in\Hol(\UD)$ satisfies
\begin{equation*}
 \anglim_{z\to\tau} \Re h_\varphi(z)=+\infty,
\end{equation*}
where ${\tau\in\UC}$ stands for the Denjoy\,--\,Wolff point of~$\varphi$. This statement can be deduced, for example, from an old result by Pommerenke~\cite[Theorem~3\+(i)]{Pom79} with the help of the uniqueness theorem for solutions to Valiron's equation, see \cite[Proposition~6]{BracciPC}.
\end{remark}

\subsection{Continuous one-parameter semigroups of holomorphic self-maps of the unit disc}\label{SS_PRE-semigroups}
 We recall that
a family $(\phi_{t})_{t\geq 0}$  of holomorphic self-maps $\phi_{t}:\D\to \D$ is called a \dff{one-parameter semigroup} if it verifies the following two algebraic properties:
\begin{enumerate}
	\item[(i)] $\phi_{0}=\id_{\D}$;
	\item[(ii)] $\phi_{t+s}=\phi_{t} \circ \phi_{s}$ for every $t,s\geq 0$.
\end{enumerate}
A one-parameter semigroup $(\phi_t)$ is said to be \dff{continuous} if ${\phi_{t}\to \phi_{0}=\id_\UD}$ locally uniformly in~$\D$ as ${t\to 0^{+}}$. Finally, a one-parameter semigroup~$(\phi_t)$ is called \dff{non-trivial} if ${\phi_t\neq\id_\UD}$ for at least one value of~${t>0}$.

The state of the art of the theory of continuous one-parameter semigroups of holomorphic self-maps can be found in the recent monograph~\cite{BCD-Book}.
It is however worth mentioning here a couple of fundamental facts explicitly. It is known (\cite{BP}, see also \cite[Theorem~10.1.4]{BCD-Book}) that if a one-parameter semigroup~$(\phi_t)$ is continuous, then it is in fact differentiable in~$t$ and moreover, there exists a holomorphic function $G:\UD\to\C$, called the \dff{infinitesimal generator} of~$(\phi_t)$ such that $\di\phi_t(z)/\di t=G(\phi_t(z))$ for every~${z\in\UD}$ and~${t\ge0}$.

It follows from the continuous version of the Denjoy\,--\,Wolff Theorem (see e.g. \cite[Theorem~8.3.1]{BCD-Book}) that if the infinitesimal generator~$G$ does not vanish in~$\UD$, then all $\phi_t$'s except for~${\phi_0=\id_\UD}$ are non-elliptic. Moreover, in this case,  any antiderivative~$H$ of the function~$1/G$ is univalent in~$\UD$ and satisfies Abel's equation  ${H\circ\phi_t=H+t}$ for all~${t\ge0}$; see e.g. \cite[Definitions~9.2.2 and~9.3.9, and Theorem~10.1.4\+(3)]{BCD-Book}. The function~$H$ is called a \dff{Koenigs function} of the one-parameter semigroup~$(\phi_t)$.

\section{Simultaneous linearization}\label{SEC:Simultaneous linearization}
An important role in the study of commuting non-elliptic self-mappings is played by the \textit{simultaneous linearization problem}
\begin{equation}\label{EQ_SL}
h\circ\varphi=h+1,\qquad h\circ\psi=h+c,
\end{equation}
which is known to have a solution for any non-elliptic self-map~${\varphi\in\Hol(\UD)}$ and any $\psi\in\Zn(\varphi)$. More precisely, the following basic result holds.
{
\makeatletter
\xpatchcmd{\@thm}{\thm@headpunct{.}}{\thm@headpunct{}}{}{}
\makeatother
\begin{theorem}\label{TH_SL0}
{\rm(see {\protect{\cite[Theorem~4.1]{CDG-zero-h.step}} and \cite[Theorem~4.2]{CDG-positive-h.step} for the parabolic case} and \protect{\cite[Theorem~3.1]{Cowen}} for the hyperbolic case){\bf.}}
Let $\varphi\in\Hol(\UD)$ be a non-elliptic self-map and let $\psi\in\Hol(\UD)$ commute with~$\varphi$. Then there exists a unique constant $c\in\C$ such that the simultaneous linearization problem~\eqref{EQ_SL} admits a holomorphic solution ${h:\UD\to\C}$.
\end{theorem}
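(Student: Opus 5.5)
The natural approach is to work in the canonical model $\mathcal M_\varphi=(S_\varphi,h_\varphi,z\mapsto z+1)$ and first push $\psi$ down to the base space. Since $\psi\circ\varphi=\varphi\circ\psi$, the map $h_1:=h_\varphi\circ\psi$ satisfies $h_1\circ\varphi=h_1+1$. Set $S_1:=\bigcup_{n\ge0}\big(h_1(\UD)-n\big)$, which is a domain in $S_\varphi$ invariant under $z\mapsto z\pm1$. Then $(S_1,h_1,z\mapsto z+1)$ is a holomorphic semimodel of $\varphi$. By Lemma~\ref{Le:morphism} there is a holomorphic map $\beta:S_\varphi\to S_1\subset S_\varphi$ such that
$$
h_\varphi\circ\psi=\beta\circ h_\varphi,\qquad \beta(z+1)=\beta(z)+1 .
$$
Hence $p:=\beta-\id$ is $1$-periodic on $S_\varphi$. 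The problem~\eqref{EQ_SL} is thereby reduced to the following: find $g:S_\varphi\to\C$ with $g(z+1)=g(z)+1$ and $g\circ\beta=g+c$, and then put $h:=g\circ h_\varphi$.

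\emph{Hyperbolic case.} Here $S_\varphi=S_I$ is a bounded strip. Via $w=e^{2\pi i z}$, $\beta$ descends to a holomorphic self-map $B$ of a nondegenerate annulus $A=S_I/\Z$. Because $\beta$ commutes with $z\mapsto z+1$, $B$ induces the identity on $\pi_1(A)$. A self-map of a nondegenerate annulus with this property is a rotation, by the Schwarz lemma for annuli (equivalently, equality in the modulus/extremal length comparison). Therefore $\beta(z)=z+c$ with $\Im c$ compatible with $\beta(S_I)\subset S_I$, so $\Im c=0$. We may then take $g=\id$, i.e.\ $h=h_\varphi$. This recovers Cowen's result.

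\emph{Parabolic case.} Now $S_\varphi$ is $\UH$ (positive step) or $\C$ (zero step), and $\beta$ need not be a translation. For instance, $z\mapsto z+i+\tfrac12e^{2\pi iz}$ maps $\UH$ into itself and commutes with $z+1$. In the positive-step case I would pass to the quotient: $B(w)=w\,e^{2\pi i p(z)}$ is a self-map of $\UD^*$. It extends holomorphically to $\UD$ with $B(0)=0$ and $B'(0)=e^{2\pi i c}$, where $c:=\lim_{\Im z\to+\infty}p(z)$. This fixes the candidate constant, and we need $g=\id+q$ with $q$ periodic. In the $w$-variable this becomes a Koenigs/Schröder-type equation for $B$ at its fixed point $0$, adjusted by the translation part. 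Solving it is the crux.

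I would try to construct $g$ as a limit of renormalized iterates, for example $g=\lim_n(\beta^{\circ n}-nc)$ or a Valiron-type normalization, using the Schwarz–Pick contraction of $\beta$ on $\UH$ to obtain convergence. If direct convergence fails, the fallback is to view $\beta$ as a non-elliptic self-map of $\UH$ commuting with the parabolic automorphism $z+1$, and to exploit its own canonical model. For zero hyperbolic step, $\beta$ is entire with periodic $\beta-\id$. Surjectivity onto $S_1$ together with the injectivity of $h_\varphi$ on an absorbing domain should force $p$ to be constant, as in \cite{CDG-zero-h.step}, so that again $h=h_\varphi$ works. I expect the positive-step parabolic existence to be the main obstacle.

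\emph{Uniqueness of $c$.} Suppose $h_1,h_2$ solve~\eqref{EQ_SL} with constants $c_1,c_2$. Applying Lemma~\ref{Le:morphism} to each $h_j$ gives $h_j=g_j\circ h_\varphi$ with $g_j-\id$ periodic, and then
$$
g_j\circ\beta^{\circ n}=g_j+nc_j .
$$
To conclude $c_1=c_2$, one divides by $n$ along a $\beta$-orbit and checks that $g_1-g_2$ is bounded along the orbit $\beta^{\circ n}(z)$. This boundedness should hold because the orbit escapes to the relevant end of $S_\varphi$, where periodic functions have limits; verifying it is the one delicate point of the uniqueness step.
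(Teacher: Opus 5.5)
Your reduction to the semimodel morphism $\beta$ (the paper's $g_\psi$) is sound, and so is your hyperbolic case, which is exactly Cowen's statement as used in Remark~\ref{RM_from-Cowen-to-us}. The paper itself only cites the literature for this theorem. The parabolic existence, however, is not proved in your proposal.

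\emph{Zero step.} You only assert that $p=\beta-\id$ is constant. Surjectivity gives nothing here: $S_1=\C$ by Picard's theorem and $1$-periodicity. Injectivity of $h_\varphi$ on an absorbing domain is useless until it is coupled with $\psi$. One way to do this: $V_\varphi=\{z:\rho^*_\UD(z,\varphi(z))<\tfrac13\}$ is a fundamental domain (so $h_\varphi$ is injective on it), and $\psi(V_\varphi)\subset V_\varphi$ by Schwarz--Pick. Hence $\beta$ maps each simply connected domain $h_\varphi(V_\varphi)-n$ into itself, and these domains exhaust $\C$. Schwarz--Pick together with Koebe's bounds for their hyperbolic densities then gives $|\beta'|\le4$, so $\beta$ is a translation.

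\emph{Positive step.} You stop at what you call the crux, but the tool is already in hand. By Schwarz's lemma $|B'(0)|=e^{-2\pi\Im c}\le1$. If $\Im c=0$, then $B$ is a rotation and $\beta(z)=z+c$. If $\Im c>0$, then $0$ is attracting, and Koenigs' theorem gives $\sigma=\lim_n e^{-2\pi inc}B^{\circ n}$ with $\sigma\circ B=e^{2\pi ic}\sigma$. By Hurwitz, $\sigma(w)/w$ is zero-free and equals $1$ at $0$, so it has a logarithm $L$ with $L(0)=0$. Put $g(z):=z+\frac1{2\pi i}L(e^{2\pi iz})$. Then $g(z+1)=g(z)+1$ and $g\circ\beta-g-c\in\mathbb Z$. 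The integer is $0$ because $g(z)-z\to0$ and $\beta(z)-z\to c$ as $\Im z\to+\infty$. This is your $\lim(\beta^{\circ n}-nc)$. It is more elementary than the cited route, which obtains $\beta_{\varphi,\psi}$ from the Koenigs function of $g_\psi$ as a self-map of $\UH$ (Remark~\ref{LM_beta-Koenigs}).

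The uniqueness step has a genuine flaw. Since $(g_1-g_2)(\beta^{\circ n}(z))=(g_1-g_2)(z)+n(c_1-c_2)$ exactly, boundedness along the orbit is \emph{equivalent} to $c_1=c_2$, and the reason you give for it is false. A $1$-periodic holomorphic function on $\UH$ or $\C$ has the form $Q(e^{2\pi iz})$, where $Q$ may be essentially singular at $0$ or $\infty$; for example, $e^{-2\pi iz}$ is unbounded as $\Im z\to+\infty$. Nothing forces the $g_j$ to be tame there. Your argument works only for $c\in\Real$, in particular in the hyperbolic case, where the orbit stays on a horizontal line. For non-real $c$, which occurs already in the zero-step case, the orbit tends to $\Im z=\pm\infty$ and the argument fails.

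The missing idea is averaging over a period. If $q$ is $1$-periodic and holomorphic in a horizontal strip or half-plane, then $\int_0^1q(x+iy)\,dx$ is independent of $y$: this follows from Cauchy's theorem on a rectangle, and the integral is the constant Laurent coefficient of $Q$. Consequently $q(z+c)=q(z)+d$ forces $d=0$. This settles uniqueness whenever $\beta$ is a translation. When $\Im c>0$ in the positive-step case, apply the same argument to $(g_1-g_2)\circ s^{-1}$ on a half-plane $\{\Im\zeta>y_1\}$. Here $s$ is the linearizer constructed above; it is univalent on some $\{\Im z>y_0\}$ and conjugates $\beta$ to $\zeta\mapsto\zeta+c$.
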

For $\psi=\id_\UD$, the statement of this theorem becomes clear if we recall that the Koenigs function~$h_\varphi$ satisfies the first equation in~\eqref{EQ_SL}.
The uniqueness of~$c$ for parabolic self-mappings follows directly from~\cite[Corollary~4.9]{CDG-positive-h.step}, while for the hyperbolic case, it can be established by using literally the same argument as in \cite[Remark~6.2]{CDG-Centralizer}.

\begin{definition}\label{DF_SimLinCoeff}
The unique value of~$c\in\C$ for which~\eqref{EQ_SL} admits a holomorphic solution ${h:\UD\to\C}$ is called the \dff{simultaneous linearization coefficient} of~$\psi\in\Zn(\varphi)$ w.r.t. the non-elliptic self-map~$\varphi$. In what follows we will denote it by~$c_{\varphi,\psi}$.
\end{definition}

In part of the cases, the existence of~$h$ in Theorem~\ref{TH_SL0} is established by considering the Koenigs function~$h_\varphi$ of the self-map~$\varphi$. Namely, we have the following result.

{
\makeatletter
\xpatchcmd{\@thm}{\thm@headpunct{.}}{\thm@headpunct{}}{}{}
\makeatother
\begin{proposition}{\rm(see \protect{\cite[Theorem~4.1]{CDG-zero-h.step}} for the parabolic case and \protect{\cite[Theorem~3.1]{Cowen}} for the hyperbolic case){\bf.}} \label{PR_hyperbolic_and_zero_step}
 Let $\varphi\in\Hol(\UD)$ be a hyperbolic or parabolic self-map of zero hyperbolic step with canonical model $(S_\varphi,h_\varphi,z\mapsto z+1)$, and let $\psi\in\Zn(\varphi)$. Then there exists a number ${c\in \C}$ such that ${h_{\varphi}\circ\psi =  h_{\varphi}+ c}$.
\end{proposition}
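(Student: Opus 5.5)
The plan is to construct, from $\psi$, a new holomorphic semimodel of $\varphi$ with the same base space $S_\varphi$. We then use the universal property of the canonical model, Lemma~\ref{Le:morphism}, to show that $h_\varphi\circ\psi$ differs from $h_\varphi$ by an automorphism of $S_\varphi$ commuting with $z\mapsto z+1$. Throughout, write $h:=h_\varphi$ and $S:=S_\varphi$, which is either the strip $S_I$ with $I$ bounded (hyperbolic case) or $\C$ (zero hyperbolic step).

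First I will show that $(S,h\circ\psi,z\mapsto z+1)$ is a holomorphic semimodel of~$\varphi$. Condition (HM1) follows from commutation:
\[
h\circ\psi\circ\varphi=h\circ\varphi\circ\psi=h\circ\psi+1.
\]
For (HM2) we need $\bigcup_{n\ge0}\big(h(\psi(\UD))-n\big)=S$. Using (HM2) for the canonical model, it suffices to check that every point of $h(\UD)$ lies in $h(\psi(\UD))-n$ for some $n$. I would argue via absorbing domains. Let $V$ be a $\varphi$-absorbing domain on which $h$ is injective. For $w=h(z)$, the points $h(\varphi^{\circ n}(z))=w+n$ eventually lie in $h(V)$. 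The question is whether $w+n\in h(\psi(\UD))$ for large $n$.

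The direct way to see this is problematic, so instead I would apply Lemma~\ref{Le:morphism} to the semimodel $(\Omega,h\circ\psi,z\mapsto z+1)$, where $\Omega:=\bigcup_n\big(h(\psi(\UD))-n\big)\subset S$. Here $\Omega$ is open, connected and invariant under $z\mapsto z\pm1$, so (HM2) holds with $S_1=\Omega$ automatically. The lemma gives a surjective holomorphic $\beta:S\to\Omega$ with $h\circ\psi=\beta\circ h$ and $\beta(z+1)=\beta(z)+1$.

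Next I will classify such $\beta$. The map $g(z):=\beta(z)-z$ is $1$-periodic and holomorphic on $S$.
\begin{itemize}
\item[$\bullet$] When $S=\C$, $\beta$ is an entire map into $\Omega\subset\C$ and $g$ descends to a holomorphic function of $e^{2\pi i z}$ on $\C^*$. I need $g$ to be constant. The natural route is to exploit that $\beta$ is surjective onto $\Omega$ and that $\beta\circ h=h\circ\psi$ with $h$ injective on an absorbing set. This allows one to construct a left inverse-type relation forcing $\beta$ to be univalent on a half-plane-like region, and then an entire function of the form $z+g(z)$ with $g$ periodic and injective on a translation-invariant region must have $g$ constant. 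Alternatively, I would follow the zero-step argument from \cite[Theorem~4.1]{CDG-zero-h.step} and reduce to Liouville-type growth control of $g$, coming from hyperbolic-distance estimates $\rho_\UD(\psi(z),z)$ being bounded along orbits, since $\psi$ commutes with $\varphi$.
\item[$\bullet$] When $S=S_I$ is a bounded strip, $\beta$ maps the strip into itself, and $g$ is periodic. Schwarz--Pick in the strip metric applied along the translation $z\mapsto z+n$ (the $1$-periodicity makes $\beta$ a self-map of the annulus $S_I/\mathbb Z$) shows that $\beta$ descends to a holomorphic self-map of the annulus homotopic to the identity, hence a rotation. Therefore $\beta(z)=z+c$ with $\Im c$ compatible with $I$.
\end{itemize}
In both cases we obtain $h\circ\psi=h+c$.

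The main obstacle is the classification step in the zero hyperbolic step case. A periodic entire perturbation of the identity is not automatically a translation, so a genuine growth or injectivity argument using the absorbing domain (HM3) is needed, and I would expect to spend most effort there. The hyperbolic case should follow cleanly from the fact that the only holomorphic self-maps of an annulus inducing the identity on the fundamental group are rotations.
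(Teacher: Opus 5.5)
Your reduction is correct. The triple $(\Omega,h\circ\psi,z\mapsto z+1)$ is exactly the semimodel $\mathcal M_\varphi^\psi$ of Lemma~\ref{LM_NSemimodel}, and your $\beta$ is the morphism $g_\psi$. The hyperbolic case also works: a holomorphic self-map of the annulus $S_I/\mathbb Z$ inducing the identity on $\pi_1$ is a rotation, so in fact $c\in\Real$. This replaces the paper's appeal to Cowen's theorem (which already yields an automorphism of $S_\varphi$) followed by Heins' lemma, see Remark~\ref{RM_from-Cowen-to-us}.

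The genuine gap is the zero hyperbolic step case, which you leave open. It is the whole content of \cite[Theorem~4.1]{CDG-zero-h.step}, the result the paper quotes. Since $g=\beta-\id_{\C}$ is $1$-periodic and $\bigcup_n(h(\UD)-n)=\C$, the conclusion is equivalent to boundedness of $h\circ\psi-h$ on $\UD$. So passing to $\beta$ only reformulates the problem, and zero step still has to be used in an essential way: in the positive-step case the analogous morphisms can be $w\mapsto w+F(e^{2\pi i w})$ with $F$ non-constant. Your first route rests on a false claim. Take $\beta(w)=w+\varepsilon e^{2\pi i w}$ with $0<|\varepsilon|<1/(2\pi)$. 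It commutes with $w\mapsto w+1$ and has $\Re\beta'>0$ on $\UH$, so it is univalent on the translation-invariant half-plane $\UH$ without being a translation. Injectivity on a domain $R$ with $\bigcup_n(R-n)=\C$ would suffice. But for $R=h(V)$ this means injectivity of $\psi$ on $V$, which you cannot assume, since $\psi$ need not be univalent.

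Your second route either invokes the theorem being proved or tries to bound $g$ itself. It uses only that $\rho_{\UD}(\psi(z),z)$ is bounded along orbits, and that gives no Euclidean bound. The missing idea is to bound $\beta'$ instead. Put $a_n:=\varphi^{\circ n}(z)$ and $b_n:=\psi(a_n)$. By periodicity of $\beta'$ and $\beta\circ h=h\circ\psi$, for large $n$ you get $\beta'(h(z))=\beta'(h(a_n))=h'(b_n)\psi'(a_n)/h'(a_n)$. Schwarz--Pick then gives $|\beta'(h(z))|\le\lambda(b_n)/\lambda(a_n)$ with $\lambda(u):=|h'(u)|(1-|u|^2)$. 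Zero step enters twice:
\begin{itemize}
\item $\rho_{\UD}(a_n,\varphi(a_n))\to0$, so hyperbolic discs of a fixed small radius about $a_n$ eventually lie in the fundamental domain $V_\varphi$, on which $h$ is injective.
\item For $n$ large, $\rho_{\UD}(a_n,b_n)\le\rho_{h(V_\varphi)-n}\big(h(z),h(\psi(z))\big)\to0$, because the domains $h(V_\varphi)-n$ increase and exhaust $\C$.
\end{itemize}
Koebe distortion then gives $\lambda(b_n)/\lambda(a_n)\to1$. Hence $|\beta'|\le1$ on $h(\UD)$, and by periodicity on all of $\C$. By Liouville, $\beta$ is affine, and $\beta(w+1)=\beta(w)+1$ forces $\beta(w)=w+c$.
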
}

\begin{remark}\label{RM_from-Cowen-to-us}
The reference to \cite[Theorem~3.1]{Cowen} needs some explanation.
To be precise, what Cowen proved in~\cite{Cowen} for hyperbolic self-maps~$\varphi$ reads in our terminology as follows: \textit{if ${\psi\in\Zn(\varphi)}$, then ${h_\varphi\circ\psi}={\alpha\circ h_\varphi}$ for a suitable ${\alpha\in\mathsf{Aut}(S_\varphi)}$ commuting with ${z\mapsto z+1}$.} Since $S_\varphi$ is a horizontal strip, one can conclude (using e.g. Heins' Lemma, see Section~\ref{SS_commuting}) that $\alpha(z)={z+c}$ for all~${z\in S_\varphi}$ and some constant~${c\in\Real}$.
\end{remark}
 Clearly, the constant in the above remark and proposition is $c=c_{\varphi,\psi}$.  We can say a bit more about this constant in the hyperbolic case.
\begin{lemma}\label{LM_c-positive}
Let $\varphi\in\Hol(\UD)$ be hyperbolic but not an automorphism, and let $\psi\in{\Zn(\varphi)\setminus\{\id_\UD\}}$. Then $c_{\varphi,\psi}$  is strictly positive.
\end{lemma}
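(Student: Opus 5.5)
We need to show: $\varphi$ hyperbolic, not an automorphism, $\psi\in\Zn(\varphi)\setminus\{\id_\UD\}$ implies $c:=c_{\varphi,\psi}>0$.

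We begin with the constant itself. By Proposition~\ref{PR_hyperbolic_and_zero_step} and Remark~\ref{RM_from-Cowen-to-us}, $h_\varphi\circ\psi=h_\varphi+c$ with $c\in\Real$; indeed $c$ is real because $\psi$ induces an automorphism of the strip $S_\varphi$ commuting with translation. The proof then rules out $c=0$ and $c<0$.

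\emph{Case $c=0$.} Here $h_\varphi\circ\psi=h_\varphi$. Take a fundamental (absorbing) domain $V$ on which $h_\varphi$ is injective, as in (HM3). For $z\in\UD$ choose $n$ with $\varphi^{\circ n}(z)\in V$ and $\varphi^{\circ n}(\psi(z))=\psi(\varphi^{\circ n}(z))$; we would like this point to lie in $V$ as well, possibly after increasing $n$, since $V$ is absorbing. Then
\[
h_\varphi\bigl(\varphi^{\circ n}(\psi(z))\bigr)=h_\varphi(\varphi^{\circ n}(z))+n .
\]
Injectivity of $h_\varphi$ on $V$ gives $\psi\circ\varphi^{\circ n}=\varphi^{\circ n}$ on the open set $\varphi^{-n}(V)$; one needs $n$ uniform on a small open set, which holds locally because the preimages $(\varphi^{\circ n})^{-1}(V)$ are open and increasing. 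Hence $\psi=\id$ on the open set $\varphi^{\circ n}(U)$, which is open because $\varphi$ is non-constant. By the identity principle $\psi=\id_\UD$, a contradiction.

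\emph{Case $c<0$.} Let $\tau$ be the Denjoy\,--\,Wolff point of $\varphi$. Since $\varphi$ is not a hyperbolic automorphism, Behan's theorem gives that $\psi$ shares $\tau$, and by Cowen's result $\psi$ is hyperbolic (if $\psi$ is non-elliptic; $\psi$ cannot be elliptic, since otherwise $\varphi$ would be elliptic). Iterating, $h_\varphi\circ\psi^{\circ n}=h_\varphi+nc$, and $\psi^{\circ n}(z)\to\tau$. The orbit of a hyperbolic map converges to $\tau$ nontangentially, i.e.\ inside a Stolz angle. By Remark~\ref{RM_Pommerenke}, $\Re h_\varphi(\psi^{\circ n}(z))\to+\infty$. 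But this quantity equals $\Re h_\varphi(z)+nc\to-\infty$, a contradiction. Hence $c>0$.

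The main obstacle is justifying the nontangential convergence of the $\psi$-orbits needed to apply the angular limit in Remark~\ref{RM_Pommerenke}. This is standard for hyperbolic self-maps: orbits stay at bounded hyperbolic distance from a geodesic ending at $\tau$. If this fact is unavailable, the fallback is to use the conjugate self-map $\psi$ and its own Koenigs function, or Julia's lemma, to compare with $\varphi$-orbits.
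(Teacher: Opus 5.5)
Your proposal is correct, and your case $c<0$ is exactly the paper's argument. Behan and Cowen make $\psi$ hyperbolic with the same Denjoy\,--\,Wolff point $\tau$; its orbits tend to $\tau$ nontangentially; and the angular limit $\Re h_\varphi\to+\infty$ at $\tau$ then forces $\Re c>0$, with $c$ real. That same argument already excludes $c=0$, since $\Re h_\varphi(z_0)+nc$ would stay bounded, so your separate injectivity argument for $c=0$ is valid but unnecessary; in it, the display should read $h_\varphi(\varphi^{\circ n}(\psi(z)))=h_\varphi(\varphi^{\circ n}(z))$, without the extra $+n$.
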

\begin{proof}
By the results of Behan and Cowen (see Section~\ref{SS_commuting}), $\psi$ is a hyperbolic self-map of~$\UD$ having the same Denjoy\,--\,Wolff point~$\tau$ as the self-map~$\varphi$. Fix some ${z_0\in\UD}$ and let ${z_n:=\psi^{\circ n}(z_0)}$. Then ${h_\varphi(z_n)=h_\varphi(z_0)+nc_{\varphi,\psi}}$ for all ${n\in\Natural}$. By Remark~\ref{RM_Pommerenke},
$
  \anglim_{z\to\tau} \Re h_\varphi(z)=+\infty.
$
Since $(z_n)$ converges to~$\tau$ non-tangentially, see e.g. \cite[Pro\-position~4.3.2]{Abate2}, it follows that $${\Re\big(h_\varphi(z_0)+nc_{\varphi,\psi}\big)}={\Re h_\varphi(z_n)\to+\infty}\quad\text{ as $~{n\to+\infty}$.}$$  Therefore ${\Re c_{\varphi,\psi}>0}$, and it only remains to recall that ${c_{\varphi,\psi}\in\Real}$ by Remark~\ref{RM_from-Cowen-to-us}.
\end{proof}

In the remaining case, i.e. when $\varphi$ is a parabolic self-map of positive hyperbolic step, the situation is more complicated. If ${c_{\varphi,\psi}\in\Real}$, then again the Koenigs map~$h_\varphi$ solves the simultaneous linearization problem~\eqref{EQ_SL}, see \cite[Remark~4.20]{CDG-positive-h.step}. However, this is not necessarily the case if ${c_{\varphi,\psi}\not\in\Real}$. Nevertheless, any solution~$h$ to~\eqref{EQ_SL} factorizes via~$h_\varphi$. Moreover, summarizing some of the results proved in~\cite[Section~4]{CDG-positive-h.step}, see in particular \cite[Propositions~4.3, 4.5, and~4.7]{CDG-positive-h.step}, we have the following.
\begin{theorem}\label{TH_SL-positive-step}
Let $\varphi\in\Hol(\UD)$ be parabolic of positive hyperbolic step and let~$\psi\in\Zn(\varphi)$. Then the simultaneous linearization problem~\eqref{EQ_SL} admits a solution of the form $h={\beta\circ h_\varphi}$, where ${\beta:S_\varphi\to\C}$ is a holomorphic function satisfying the following identities:
\begin{subequations}\label{EQ_beta(sys)}
	\begin{align}
		\label{EQ_beta(a)} &\beta(w+1)=\beta(w)+1,\quad \text{for all $w\in S_\varphi$};\\
		\label{EQ_beta(b)} &\beta \circ h_{\varphi}\circ\varphi = \beta \circ h_{\varphi} + 1;\\
		\label{EQ_beta(c)} &\beta \circ h_{\varphi}\circ\psi = \beta \circ h _{\varphi}+ c_{\varphi,\psi};\\
        \label{EQ_beta(d)} &\beta(w)-w \to 0\quad\text{as~}~|\Im w|\to+\infty,~w\in S_\varphi.
	\end{align}
   \end{subequations}
Moreover, if $c_{\varphi,\psi}\in\C\setminus{\mathbb Q}$, then the function~$\beta\in\Hol(S_\varphi,\C)$ satisfying~\eqref{EQ_beta(sys)} is unique and every solution to~\eqref{EQ_SL} coincides with ${\beta\circ h_\varphi}$ up to an additive constant.
\end{theorem}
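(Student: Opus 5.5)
The plan is to produce $\beta$ by factorizing through the canonical model, and then to correct its behaviour near $\infty$ using the Fourier expansion of a $1$-periodic function on the half-plane. As in Remark~\ref{RM_normalization}, we may assume $S_\varphi=\UH$.

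\emph{Construction of $\beta$ satisfying \eqref{EQ_beta(a)}--\eqref{EQ_beta(c)}.} By Theorem~\ref{TH_SL0}, there is a holomorphic $h:\UD\to\C$ with $h\circ\varphi=h+1$ and $h\circ\psi=h+c$, where $c:=c_{\varphi,\psi}$. Put $\Omega:=\bigcup_{n\ge0}\big(h(\UD)-n\big)$. Then $(\Omega,h,w\mapsto w+1)$ is a holomorphic semimodel of $\varphi$: the translation is an automorphism of $\Omega$ because $h(\UD)+1=h(\varphi(\UD))\subset h(\UD)$, and (HM2) holds by construction. Lemma~\ref{Le:morphism} then gives a holomorphic $\beta:S_\varphi\to\Omega\subset\C$ with $h=\beta\circ h_\varphi$ and $\beta(w+1)=\beta(w)+1$. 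This is \eqref{EQ_beta(a)}, and \eqref{EQ_beta(b)} and \eqref{EQ_beta(c)} are just the two equations of \eqref{EQ_SL} rewritten.

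\emph{Achieving \eqref{EQ_beta(d)}.} The function $p(w):=\beta(w)-w$ is $1$-periodic on $\UH$, so it has a Laurent/Fourier expansion
\begin{equation*}
p(w)=\sum_{n\in\mathbb Z}a_n e^{2\pi i n w}.
\end{equation*}
Subtracting the constant $a_0$ from $\beta$ preserves (a)--(c), since $h$ is only changed by an additive constant. It therefore suffices to show that $a_n=0$ for all $n<0$; then $p(w)\to0$ as $\Im w\to+\infty$.

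To do this I would use $\psi$. Since $h_\varphi\circ\psi$ again intertwines $\varphi$ with $w\mapsto w+1$, I would try to show, via the model property (HM3) on an absorbing domain and Lemma~\ref{Le:morphism} applied to the semimodel generated by $h_\varphi\circ\psi$, that $h_\varphi\circ\psi=\eta\circ h_\varphi$ for some $\eta\in\Hol(\UH)$ commuting with $w\mapsto w+1$. Then $\eta(w)=w+b+q(w)$ with $q$ periodic and $\Im\eta(w)\ge\Im w$ asymptotically, and \eqref{EQ_beta(c)} becomes $\beta\circ\eta=\beta+c$ on $h_\varphi(\UD)$, hence on all of $\UH$ by (a) and the identity principle. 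Substituting the Fourier series and comparing the dominant exponential terms $e^{2\pi i n w}$, $n<0$, as $\Im w\to+\infty$ should force these coefficients to vanish; otherwise the growing part of $\beta\circ\eta-\beta$ could not be constant. This is the step I expect to be the main obstacle. It needs precise control of $\eta$ near $\infty$: its ``horizontal shift'' $b$ may be non-real, and $q$ may itself carry growing modes. In particular, $\Im\eta(w)-\Im w$ may fail to stay bounded, and the argument would then have to be split according to that behaviour.

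\emph{Uniqueness for $c\notin\mathbb Q$.} Let $\beta_1,\beta_2$ both satisfy \eqref{EQ_beta(sys)}, and set $D:=\beta_1-\beta_2$. Then $D$ is $1$-periodic, $D(w)\to0$ as $\Im w\to+\infty$, and $D\circ\eta=D$. Iterating, $D=D\circ\eta^{\circ n}$. Moreover $\eta^{\circ n}(w)$ behaves like $w+nc$ translated modulo $1$, and the irrationality of $c$ makes these points dense (mod $1$) along horizontal lines or push them to $\Im w=+\infty$. In either case $D$ is constant on a set with an accumulation point, hence $D\equiv$ const, and the decay condition gives $D\equiv0$. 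Finally, if $\tilde h$ is any solution of \eqref{EQ_SL}, the first step shows $\tilde h=\tilde\beta\circ h_\varphi$ with $\tilde\beta$ satisfying (a)--(c). Then, as before, $\tilde\beta-a_0$ satisfies (d), so by uniqueness $\tilde h$ differs from $\beta\circ h_\varphi$ by an additive constant.
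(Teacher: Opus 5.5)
Your construction of $\beta$ with \eqref{EQ_beta(a)}--\eqref{EQ_beta(c)}, via the semimodel $\bigl(\Omega,h,w\mapsto w+1\bigr)$ and Lemma~\ref{Le:morphism}, is correct. The step for \eqref{EQ_beta(d)}, however, has a genuine gap. You reduce it to showing that $\beta(w)-w$ has no negative Fourier modes for the $\beta$ coming from an \emph{arbitrary} solution $h$ of \eqref{EQ_SL}. This fails when $c_{\varphi,\psi}\in\mathbb Q$. For example, take $\psi=\id_\UD$ (so $c_{\varphi,\psi}=0$) or $\psi=\varphi$ (so $c_{\varphi,\psi}=1$). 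Then $h:=\beta\circ h_\varphi$ with $\beta(w)=w+e^{-2\pi i w}$ solves \eqref{EQ_SL}, yet $a_{-1}=1$. More generally, if $\eta(w)=w+m/q$, every mode $e^{2\pi i kqw}$, $k\in\mathbb Z$, is $\eta$-invariant and can be added to $\beta$ freely. So in the rational case you cannot extract existence from an arbitrary solution. You must produce the right one: prove $h_\varphi\circ\psi=h_\varphi+c_{\varphi,\psi}$ and take $\beta=\id_{S_\varphi}$, which is exactly the choice in Definition~\ref{DF_SimLinFunc}.

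When $\eta$ is not a translation, ``comparing dominant exponential terms'' also fails as stated. Composition with $\eta$ mixes the modes, and $\beta(w)-w$ may a priori have infinitely many negative modes (an essential singularity at $\zeta:=e^{2\pi i w}=0$). Then there is no dominant term, and you can at best exclude a pole. Similarly, in your uniqueness argument, ``$\eta^{\circ n}(w)$ behaves like $w+nc$'' cannot support a density argument for real irrational $c$; you need $\eta(w)=w+c$ exactly.

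The missing idea is a precise description of $\eta=g_\psi$, which is what the results quoted in the paper rest on (compare Step~1 of the proof of Theorem~\ref{Thm:embedding} and Remark~\ref{LM_beta-Koenigs}).

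\emph{The dichotomy.} Since $\eta\in\Hol(\UH)$ commutes with $w\mapsto w+1$, it descends via $\zeta=e^{2\pi i w}$ to a self-map $f$ of $\UD\setminus\{0\}$, and $f$ extends holomorphically to $\UD$. Since $f$ winds once around $0$ on small circles, the argument principle gives $f(0)=0$ and $f'(0)\neq0$. The Schwarz lemma then yields: either $\eta(w)=w+b$ with $b\in\Real$, or $\eta(w)=w+F(e^{2\pi i w})$ with $F\in\Hol(\UD,\UH)$.

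\emph{Translation case.} The zeroth Fourier coefficient of $\beta(w+b)=\beta(w)+c_{\varphi,\psi}$ gives $c_{\varphi,\psi}=b$. Hence $\beta=\id_\UH$ works, and for $b\notin\mathbb Q$ the remaining coefficients give uniqueness.

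\emph{Non-translation case.} Here $\lambda:=f'(0)=e^{2\pi iF(0)}$ satisfies $0<|\lambda|<1$. Let $\kappa$ be the Koenigs function of $f$, so $\kappa\circ f=\lambda\kappa$, $\kappa'(0)=1$, and $\kappa$ vanishes only at $0$. Define $\beta(w):=w+\tfrac{1}{2\pi i}\log\bigl(\kappa(\zeta)/\zeta\bigr)$, $\zeta=e^{2\pi i w}$, choosing the branch that vanishes at $\zeta=0$. This $\beta$ satisfies \eqref{EQ_beta(a)}, \eqref{EQ_beta(d)} and $\beta\circ\eta=\beta+F(0)$.

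Now let $\tilde\beta$ come from any solution of \eqref{EQ_SL}. Then $\tilde\beta-\beta=Q(e^{2\pi i w})$ with $Q-Q\circ f\equiv F(0)-c_{\varphi,\psi}$ near $0$. In Koenigs coordinates, $R:=Q\circ\kappa^{-1}$, the function $zR'(z)$ is invariant under $z\mapsto\lambda z$. It is therefore constant, because $\C^*/\lambda^{\mathbb Z}$ is compact, and in fact zero, because $R$ is single-valued. Thus $Q$ is constant, $c_{\varphi,\psi}=F(0)$, and $\tilde\beta=\beta+\mathrm{const}$. This gives \eqref{EQ_beta(d)} and uniqueness, and disposes of the essential-singularity issue.
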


\begin{definition}\label{DF_SimLinFunc}
If $\varphi$ and~$\psi$ are as in Theorem~\ref{TH_SL-positive-step} and ${c_{\varphi,\psi}\in\C\setminus{\mathbb Q}}$, then we denote the unique solution to~\eqref{EQ_beta(sys)} by~$\beta_{\varphi,\psi}$. If ${c_{\varphi,\psi}\in{\mathbb Q}}$, then among the solutions we choose~--- and denote it again by~$\beta_{\varphi,\psi}$~--- the identity map~$\id_{S_\varphi}$.
In both cases, we call $\beta_{\varphi,\psi}$ the \dff{simultaneous linearization function} associated with~$\varphi$ and~$\psi$.
\end{definition}

Another tool, which plays an important role in the proof of our main result, is the notion of the semimodel morphism associated to a commuting pair of non-elliptic self-maps.

\begin{lemma}[\protect{\cite[Lemma 4.16]{CDG-positive-h.step}}]\label{LM_NSemimodel}
Let $\varphi\in\Hol(\D)$ be a non-elliptic self-map with canonical holomorphic model $\mathcal M_{\varphi}:={\big(S_\varphi,h_\varphi,z\mapsto z+1\big)}$ and let ${\psi\in\Zn(\varphi)}$. Then
\begin{equation*}%\label{EQ_NSemimodel1}
       S^{\psi}:=\bigcup_{n\geq0}\left(h_\varphi(\psi(\D))-n\right)
\end{equation*}
is a subdomain of the domain~$S_\varphi$. Moreover, the triple
\begin{equation}\label{EQ_NSemimodel2}
\mathcal M_\varphi^\psi:=\big(S^\psi,h_\varphi\circ\psi,z\mapsto z+1\big)
\end{equation}
 is a semimodel of $\varphi$. 	
\end{lemma}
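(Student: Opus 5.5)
Because $h_\varphi\circ\varphi=h_\varphi+1$ and $\psi$ commutes with $\varphi$, we have
$$
(h_\varphi\circ\psi)\circ\varphi=h_\varphi\circ\varphi\circ\psi=h_\varphi\circ\psi+1 .
$$
So the intertwining condition (HM1) for $\mathcal M_\varphi^\psi$ holds with $\alpha(z)=z+1$. It therefore remains to check that $S^\psi$ is a domain contained in $S_\varphi$ and that it is invariant under $z\mapsto z+1$, as well as its inverse, in the appropriate sense. The absorption condition (HM2) will then be built into the definition of~$S^\psi$.

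First, the inclusion. Since $h_\varphi(\D)\subset S_\varphi$ and $S_\varphi$ (a strip, a half-plane, or $\C$) is invariant under horizontal translations by $\pm n$, each set $h_\varphi(\psi(\D))-n$ lies in $S_\varphi$. Hence $S^\psi\subset S_\varphi$.

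Second, openness and connectedness. The map $h_\varphi\circ\psi$ is holomorphic and non-constant; it is non-constant because composing it with $\varphi$ shifts it by~$1$. By the open mapping theorem, $h_\varphi(\psi(\D))$ is therefore a domain, and so is each translate $h_\varphi(\psi(\D))-n$. To see that the union is connected, I would use that these translates form an increasing chain:
$$
h_\varphi(\psi(\D))-n=h_\varphi(\psi(\varphi(\D)))-(n+1)\subset h_\varphi(\psi(\D))-(n+1).
$$
An increasing union of domains is a domain.

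Third, the semimodel axioms. By construction, $(z\mapsto z+1)^{-n}\big(h_\varphi(\psi(\D))\big)=h_\varphi(\psi(\D))-n$, so (HM2) holds, and (HM1) was checked above. One point still needs care: the automorphism in a semimodel must map the base space onto itself. The chain inclusion shows that $S^\psi-1\subset S^\psi$. For the reverse direction, take $w=h_\varphi(\psi(z))-n$. If $n\ge1$, then $w+1=h_\varphi(\psi(z))-(n-1)\in S^\psi$. If $n=0$, then $w+1=h_\varphi(\psi(\varphi(z)))\in h_\varphi(\psi(\D))$. Hence $S^\psi+1\subset S^\psi$ as well, and translation by~$1$ is an automorphism of~$S^\psi$.

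The only genuinely delicate point is this bijectivity of the translation on $S^\psi$. The $n=0$ case handles it, because $h_\varphi\circ\psi\circ\varphi=h_\varphi\circ\psi+1$ shows that the image $h_\varphi(\psi(\D))$ is forward invariant under $z\mapsto z+1$.
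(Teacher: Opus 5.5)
Your proof is correct and complete. It checks everything the statement requires: non-constancy of $h_\varphi\circ\psi$ (from $h_\varphi\circ\psi\circ\varphi=h_\varphi\circ\psi+1$), that $S^\psi$ is a domain via the increasing chain $h_\varphi(\psi(\D))-n\subset h_\varphi(\psi(\D))-(n+1)$, that $z\mapsto z+1$ is an automorphism of $S^\psi$ via $S^\psi\pm1\subset S^\psi$, and (HM1)--(HM2). The paper gives no proof of its own here; it quotes the lemma from the authors' earlier work, so there is no in-paper argument to compare against, and yours is the natural direct verification.
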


\begin{definition}\label{DF_semimodel-morphism}
It is known (\cite[Definition~3.5.5 and Lemma~3.5.8]{Abate2}) that there exists a unique morphism~$g_\psi$ of the canonical model~$\mathcal M_\varphi$ to the semimodel $\mathcal M_\varphi^\psi$ defined by~\eqref{EQ_NSemimodel2}. We call~$g_\psi$ the \dff{semimodel morphism associated with~$\psi$}.
\end{definition}

\begin{remark}\label{LM_beta-Koenigs}
  Given $\psi\in\Hol(\UD)\setminus\{\id_\UD\}$ commuting with a parabolic self-map $\varphi\in\Hol(\UD)$ of positive hyperbolic step, the simultaneous linearization function $\beta_{\varphi,\psi}$ is closely related to the semimodel morphism~$g_\psi$ associated with~$\psi$. Namely, if we suppose for simplicity that $S_\varphi=\UH$, then ${\beta_{\varphi,\psi}=c_{\varphi,\psi}h_{g_\psi}+b_\psi}$, where $h_{g_\psi}$ is the Koenigs function of~$g_\psi$ and ${b_\psi\in\C}$ is a suitable constant. In case ${c_{\varphi,\psi}\not\in\Real}$, this relation was established in our proof of the uniqueness for solutions to~\eqref{EQ_beta(sys)}, see \cite[Proof of Propositions~4.5 and~4.7]{CDG-positive-h.step}. If ${c_{\varphi,\psi}\in\Real}$, it suffices to notice that in such a case, $\beta_{\varphi,\psi}=\id_{\UH}$ by the very definition and that ${h_{g_\psi}=c_{\varphi,\psi}^{-1}\id_\UH}$ because $g_\psi(w)={w+c_{\varphi,\psi}}$ for all ${w\in\UH}$ by \cite[Remark~4.20]{CDG-positive-h.step}.
\end{remark}
\begin{remark}\label{RM_beta-g_psi}
Moreover, the semimodel morphism~$g_\psi$ is a holomorphic self-map of~$S_\varphi$ and it satisfies the following relations
\begin{align*}
  & g_\psi\circ h_\varphi = h_\varphi\circ\psi,
      \qquad g_\psi\circ(w\mapsto w+1)=(w\mapsto w+1)\circ g_\psi,\\
  &\text{and}~\,~ \beta_{\varphi,\psi}\circ g_\psi=\beta_{\varphi,\psi}+c_{\varphi,\psi},
\end{align*}
where the last equality follows directly from the previous remark; for the other two equalities, see \cite[Remark~4.18]{CDG-positive-h.step}.
\end{remark}

\section{Proof of Theorem \ref{Thm:embedding}}\label{SEC:Proof}

\subsection{The hyperbolic case}\label{SS_hyperbolic}
Let $\varphi\in\Hol(\UD)$ be a hyperbolic self-map. Since every hyperbolic automorphism is embeddable, see e.g. \cite[Proposition~6.9\+(A)]{CDG-Centralizer}, we may assume that $\varphi$ is not an automorphism.

Suppose further that ${\Zn(\varphi)\setminus\{\id_\UD\}}$ contains a sequence $(\psi_n)$ converging to~$\id_\UD$. We are going to show that in such a case, $\Zn(\varphi)$ contains a non-trivial continuous one-parameter semigroup~$(\phi_t)$. As a byproduct of the proof, we will see that in fact, $\varphi=\phi_1$ and hence, $\varphi$ is embeddable.

According to Proposition~\ref{PR_hyperbolic_and_zero_step} and Lemma~\ref{LM_c-positive}, there exists a sequence ${(c_n)\subset(0,+\infty)}$ such that ${h_\varphi\circ\psi_n=h_\varphi+c_n}$ for all ${n\in\Natural}$. Consider the functions
$$
  G_n(z):=\frac{\psi_n(z)-z}{c_n},\quad z\in\UD,~\,n\in\Natural.
$$
By \cite[Corollaries~10.2.7 and~10.2.11]{BCD-Book}, these functions are infinitesimal generators.

Observe that $\psi_n(z)-z$ does not vanish in~$\UD$ because the self-maps $\psi_n$'s are hyperbolic (see Section~\ref{SS_commuting}). Since for each $z\in\UD$, ${\psi_n(z)\to z}$ as ${n\to+\infty}$, we have
$$
  \frac{h_\varphi(\psi_n(z))-h_\varphi(z)}{\psi_n(z)-z}\to h_\varphi'(z) \quad\text{as ${n\to+\infty}$}.
$$
Combining this with the identity
$$
  G_n(z)\,\frac{\,h_\varphi(\psi_n(z))-h_\varphi(z)\,}{\psi_n(z)-z}~=~
   \frac{\psi_n(z)-z}{c_n}\,\frac{c_n}{\psi_n(z)-z}=1\quad\text{for all~$z\in\UD$ and all~$n\in\Natural$},
$$
we see that $G_n(z)\to1/h_\varphi'(z)$ as ${n\to+\infty}$ for each ${z\in\UD}$ such that ${h_\varphi'(z)\neq0}$. According to \cite[Proposition~10.5.2]{BCD-Book}, it follows that $(G_n)$ has a subsequence converging locally uniformly in~$\UD$ to an infinitesimal generator. Therefore, ${G(z):=1/h_\varphi'(z)}$ is an infinitesimal generator in~$\UD$.

Denote by $(\phi_t)$ the one-parameter semigroup generated by~$G$.
Since $G$ does not vanish in~$\UD$, $(\phi_t)$ is a non-trivial non-elliptic semigroup and its Koenigs function $H$ is an antiderivative of $1/G$, see Section~\ref{SS_PRE-semigroups}. Thus, $H$ coincides, up to a constant, with~$h_\varphi$. Since the Koenigs function of a one-parameter semigroup is univalent, see e.g. \cite[Definitions~9.2.2 and~9.3.9]{BCD-Book}, it follows that ${\varphi=h_\varphi^{-1}\circ(h_\varphi+1)=\phi_1}$.

The desired conclusion follows now from the facts that elements of every continuous one-parameter semigroup are univalent (see e.g. \cite[Theorem~8.1.17]{BCD-Book}) and that they commute with each other (see condition~(ii) in Section~\ref{SS_PRE-semigroups}). \qed

\begin{remark}
In case of a hyperbolic self-map $\varphi\in\Hol(\UD)\setminus\Aut$ satisfying the hypothesis of Theorem~\ref{Thm:embedding}, the centralizer $\Zn(\varphi)$ can be completely described. Namely,   ${\Zn(\varphi)=\{\phi_t:t\ge0\}}$, that is, $\varphi$ commutes with no holomorphic self-maps of~$\UD$ other than the elements of the one-parameter semigroup~$(\phi_t)$ constructed in the above proof.
Indeed, if $\psi\in\Zn(\varphi)\setminus\{\id_\UD\}$, then by Proposition~\ref{PR_hyperbolic_and_zero_step} and Lemma~\ref{LM_c-positive}, we have
$$
  \psi=h_\varphi^{-1}\circ(h_\varphi+t)=\phi_t \quad \text{with~$~{t:=c_{\varphi,\psi}>0}$,}
$$
where we have used the fact that the Koenigs function of a univalent self-map is also univalent, see e.g. \cite[Lemma~3.5.4\+(iv)]{Abate2}.

The situation is quite different in the parabolic case, see e.g. examples in~\cite[Section~8]{CDG-Centralizer} and Example~\ref{EX_example}.
\end{remark}

\subsection{The parabolic case}
We start by introducing a construction, which will allow us to reduce the proof of Theorem~\ref{Thm:embedding} for parabolic self-maps to the case when~$\varphi$ is univalent.

Let $\varphi\in\Hol(\UD)$ be a parabolic self-map. Define
$$
 V_{\varphi}:=\big\{ z\in \mathbb{D}:\phd_\UD(z,\varphi
(z))<\tfrac13 \big\},
$$
where $\phd_\UD$ stands for the pseudo-hyperbolic distance in~$\UD$. It is known (see \cite[Theorem~5.1]{CDP}) that $V_\varphi$ is a fundamental domain for~$\varphi$, that is, $V_\varphi$ is simply connected and $\varphi$-absorbing, and ${\varphi:V_\varphi\to V_\varphi}$ is univalent. Fix a conformal map~$f$ of~$\D$ onto~$V_\varphi$. Then
\begin{equation}\label{EQ_varphi-tilde}
 \widetilde \varphi:=f^{-1}\circ\varphi\circ f
\end{equation}
is a well-defined univalent self-map~$\widetilde \varphi:\UD\to\UD$.

Moreover, by \cite[Lemma~2.10]{CDG-zero-h.step}, $\psi(V_\varphi)\subset V_\varphi$ for any ${\psi\in\Zn(\varphi)}$. As a result, there is a natural map~$\mathcal{R}_{\varphi}$ from $\Zn(\varphi)$ to $\Zn(\widetilde\varphi)$ defined by
\begin{equation}\label{EQ_the-tilde-map}
 \mathcal{R}_{\varphi}:\Zn(\varphi)\ni\psi\mapsto\widetilde \psi:=f^{-1}\circ\psi\circ f\in\Zn(\widetilde\varphi).
\end{equation}

\begin{lemma}\label{Thm:closedtilde}
 The map $\mathcal{R}_{\varphi}:\Zn(\varphi)\to\Zn(\widetilde\varphi)$ defined above is an injective homomorphism of unital semigroups, i.e. $\mathcal{R}_{\varphi}(\id_\UD)=\id_\UD$ and $\mathcal{R}_{\varphi}(\psi_1\circ\psi_2)={\mathcal{R}_{\varphi}(\psi_1)\circ \mathcal{R}_{\varphi}(\psi_2)}$ for any ${\psi_1,\psi_2\in\Zn(\varphi)}$. Moreover, $\mathcal{R}_{\varphi}$ is continuous and closed w.r.t. the topologies in $\Zn(\varphi)$ and $\Zn(\widetilde\varphi)$ inherited from~$\Hol(\UD)$.
\end{lemma}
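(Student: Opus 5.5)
The algebraic part comes straight from the definition~\eqref{EQ_the-tilde-map}. Clearly $\mathcal R_\varphi(\id_\UD)=f^{-1}\circ f=\id_\UD$. For $\psi_1,\psi_2\in\Zn(\varphi)$, both maps leave $V_\varphi=f(\UD)$ invariant by \cite[Lemma~2.10]{CDG-zero-h.step}, so
$$
f^{-1}\circ\psi_1\circ\psi_2\circ f=(f^{-1}\circ\psi_1\circ f)\circ(f^{-1}\circ\psi_2\circ f).
$$
This gives the homomorphism property. For injectivity, suppose $\widetilde\psi_1=\widetilde\psi_2$. Then $\psi_1=\psi_2$ on the open set $V_\varphi$, and the identity principle gives $\psi_1=\psi_2$ on~$\UD$.

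For continuity, let $\psi_n\to\psi$ locally uniformly in~$\UD$ with $\psi_n,\psi\in\Zn(\varphi)$. Take a compact $K\subset\UD$. Then $f(K)$ is compact in $V_\varphi$, and $\psi_n\circ f\to\psi\circ f$ uniformly on~$K$. The limit $\psi(f(K))$ is a compact subset of the open set $V_\varphi$, so for large $n$ the values $\psi_n(f(K))$ lie in a fixed compact neighbourhood $L\subset V_\varphi$ of $\psi(f(K))$. Since $f^{-1}$ is uniformly continuous on~$L$, we get $\widetilde\psi_n\to\widetilde\psi$ uniformly on~$K$. As the topologies are metrizable, sequential continuity suffices.

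Closedness is the main point. It suffices to show that the image $\mathcal R_\varphi(\Zn(\varphi))$ is closed in $\Zn(\widetilde\varphi)$ and that $\mathcal R_\varphi$ is a homeomorphism onto its image, since then images of closed sets are closed. Let $\widetilde\psi_n=\mathcal R_\varphi(\psi_n)\to\chi$ in $\Hol(\UD)$. Then $\psi_n=f\circ\widetilde\psi_n\circ f^{-1}\to f\circ\chi\circ f^{-1}$ locally uniformly on $V_\varphi$, but we need convergence on all of~$\UD$. The plan is to use the absorbing property together with commutation. Every compact $K\subset\UD$ satisfies $\varphi^{\circ m}(K)\subset V_\varphi$ for some~$m$, using compactness and the fact that $V_\varphi$ is open, invariant and absorbing. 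Then $\varphi^{\circ m}\circ\psi_n=\psi_n\circ\varphi^{\circ m}$ converges uniformly on~$K$. Recovering $\psi_n$ itself from this requires a normal-families argument. Since $(\psi_n)\subset\Hol(\UD)$ is a normal family by Montel's theorem, every subsequence has a further subsequence converging to some $\psi\in\Hol(\UD)\cup\partial\UD$, the latter meaning a constant in $\partial\UD$. On $V_\varphi$ that limit equals $f\circ\chi\circ f^{-1}$, which is a holomorphic map into $V_\varphi\subset\UD$, so the limit cannot be a boundary constant. Hence $\psi\in\Hol(\UD)$, and $\psi$ is uniquely determined on~$\UD$ by the identity principle. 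So the whole sequence $\psi_n$ converges to~$\psi$.

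By Lemma~\ref{LM_closed}, $\psi\in\Zn(\varphi)$, and $\mathcal R_\varphi(\psi)=\chi$. This shows that the image is closed. The same subsequence argument, applied to arbitrary $\psi_n$ with $\mathcal R_\varphi(\psi_n)\to\mathcal R_\varphi(\psi)$, gives $\psi_n\to\psi$, so $\mathcal R_\varphi^{-1}$ is continuous on the image. A closed subset $C\subset\Zn(\varphi)$ is then mapped onto a set closed in the image, which is itself closed in $\Zn(\widetilde\varphi)$. The main obstacle I expect is this step of excluding a degenerate boundary-constant limit and promoting convergence from $V_\varphi$ to~$\UD$, and Montel's theorem together with the identity principle handles it.
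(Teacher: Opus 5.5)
Your proof is correct and follows essentially the paper's argument: the algebraic part, injectivity and continuity are routine, and closedness rests on the same Montel/open-mapping step, where a unimodular constant limit is excluded because on $V_\varphi$ the limit equals $f\circ\chi\circ f^{-1}$, followed by Lemma~\ref{LM_closed}. The differences are only in packaging. You phrase closedness as ``closed image plus continuous inverse on the image'', which directly yields the continuous left inverse $\mathcal L_\varphi$ used later in Step~2. Your detour via $\varphi^{\circ m}(K)\subset V_\varphi$ is never actually used and can be dropped.
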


\begin{proof}
It is straightforward to check that $\mathcal{R}_{\varphi}$ is a homomorphism and that it is continuous. The injectivity is an easy consequence of the Identity Principle.

Let us show that the map $\mathcal{R}_{\varphi}$ is closed.
To this end, consider a sequence ${(\psi_n)\subset\Zn(\varphi)}$ such that its image w.r.t. $\mathcal{R}_{\varphi}$, i.e. the sequence $(\widetilde \psi_n)$, converges locally uniformly in $\D$ to some ${\phi\in \Zn(\widetilde \varphi)}$.

We have to show that $\phi$ can be obtained as the image under~$\mathcal{R}_{\varphi}$ of a self-map belonging to the closure of ${F:=\{\psi_n:n\in\N\}}$ in $\Zn(\varphi)$.
Since $F$ is a relatively compact subset of $\Hol(\D,\C)$, we can take a subsequence $(\psi_{n_k})$ converging locally uniformly in~$\D$ to some ${\psi\in\Hol(\D,\C)}$.
Using the Open Mapping Theorem, we see that either ${\psi\in\Hol(\D)}$ or ${\psi\equiv \sigma}$ for some ${\sigma\in\partial\D}$. The latter alternative is not possible because, for every $z\in V_\varphi$,
	$$
	\sigma=\lim_{k\to\infty}\psi_{n_k}(z)=f\circ \phi\circ f^{-1}(z)\in\D.
	$$
As a consequence, by Lemma~\ref{LM_closed}, $\psi\in\Zn(\varphi)$. Appealing to the continuity of $\mathcal{R}_{\varphi}$, we conclude that $(\widetilde \psi_{n_k})$ converges locally uniformly in $\D$ to $\widetilde \psi=\mathcal R_\varphi(\psi)$. Thus, $\phi=\mathcal R_\varphi(\psi)$ as desired.
\end{proof}

Another important toolset for the proof of our main result is provided by the theory of normal families. Therefore, it seems worth quoting here some basics of this theory. For a detailed account on the topic, we refer interested readers to the monograph~\cite{NormalFamilies}.

Recall that a family~$\mathcal F\subset\Hol(D,\C)$ is said to be \dff{normal} in a domain~$D$ if every sequence ${(f_n)\subset \mathcal F}$ has a subsequence converging locally uniformly in~$D$ either to some ${f\in\Hol(D,\C)}$ or to~$\infty$. Montel's Criterion (see e.g. \cite[\S{}II.7, Theorem~1]{Goluzin} or \cite[p.\,54]{NormalFamilies}) implies that if ${D\subset\C}$ is a domain with $\C\setminus D$ containing at least two different points, then $\Hol(D)$ is a normal family.
\begin{remark}\label{RM_univ-normality}
If all elements of a family
$\mathcal F\subset\Hol(D,\C)$ are univalent in~$D$ and if there are two distinct points $z_0,z_1\in\UD$ such that the sets ${\{f(z_0):f\in\mathcal F\}}$ and ${\{f(z_1):f\in\mathcal F\}}$ are bounded, then $\mathcal F$ is normal in~$\UD$. This follows e.g. from  \cite[Theorems~A.5 and~A.6]{NormalFamilies}. Note also that adding any finite number of functions to a given family does not alter its normality. Moreover, it is known that $\mathcal F\subset\Hol(D,\C)$ is normal in~$D$ if and only if for any ${z_0\in D}$ there exists a subdomain of~$D$ containing~$z_0$  in which $\mathcal F$ is normal, see e.g. \cite[Theorem~2.1.2]{NormalFamilies}.
\end{remark}

Several times during the proof of Theorem~\ref{Thm:embedding} we will use the following well-known corollary of Hurwitz's Theorem, see e.g. \cite[p.\,4]{Duren}. (Alternatively, it can be deduced using \cite[\S\,I.1, Theorem~2]{Goluzin}.)
\begin{theorem}\label{TH_Hurwitz}
Let $f_n$ be a sequence of holomorphic functions converging locally uniformly in a domain~$D$ to a holomorphic function~$f$. The following statements hold:
\begin{romlist}
\item if all the functions $f_n$ are univalent in~$D$, then $f$ is either constant, or univalent~in~$D$;
\item if $f_n(D)\subset D$ for all~${n\in\Natural}$ and $f$ is not constant, then $f(D)\subset D$.
\end{romlist}
\end{theorem}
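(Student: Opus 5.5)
The statement to prove is Theorem~\ref{TH_Hurwitz}, the classical corollary of Hurwitz's Theorem. The plan is to derive both parts from the basic Hurwitz principle: if $g_n\to g$ locally uniformly in~$D$, with $g\not\equiv0$ and $g(z_0)=0$, then for every sufficiently small disc $B$ around~$z_0$ and all large~$n$, $g_n$ has a zero in~$B$. This follows from the argument principle, or from Rouché's theorem applied on $\partial B$, where $|g|$ is bounded below while $|g_n-g|\to0$ uniformly.

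For~(i), assume $f$ is non-constant but not univalent. Then there are distinct points $a,b\in D$ with $f(a)=f(b)=:w$. Choose disjoint closed discs $B_a\ni a$ and $B_b\ni b$ inside~$D$ on whose boundaries $f-w$ has no zeros; this is possible because zeros of the non-constant function $f-w$ are isolated. Apply the Hurwitz principle to $g_n:=f_n-w\to g:=f-w\not\equiv0$ on each disc. For large~$n$, $f_n-w$ then vanishes both in $B_a$ and in $B_b$, so $f_n$ takes the value~$w$ at two distinct points. This contradicts the univalence of~$f_n$.

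For~(ii), the inclusion $f(D)\subset\overline{D}$ is immediate, since $f(z)=\lim f_n(z)$ with $f_n(z)\in D$. Because $f$ is non-constant, the Open Mapping Theorem shows that $f(D)$ is open, and an open subset of $\overline D$ lies in the interior of~$\overline D$. For a general domain that interior may be strictly larger than~$D$, so I would argue directly instead. Suppose $w_0=f(z_0)\notin D$. The function $f-w_0$ is non-constant and vanishes at~$z_0$, but $f_n-w_0$ never vanishes, because $f_n(D)\subset D\not\ni w_0$. This contradicts the Hurwitz principle.

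The only genuinely delicate point is this direct Hurwitz argument in~(ii), which replaces the insufficient openness argument. Everything else is routine. In the paper the result is used with $D=\UD$ (and analogous domains), where openness alone would already suffice.
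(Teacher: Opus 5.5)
Your proof is correct and follows the route the paper intends: the paper gives no argument of its own and only cites the statement as a well-known corollary of Hurwitz's theorem (Duren, p.~4, or Goluzin). Your Rouch\'e-based derivation of (i) and (ii) is exactly that corollary, and one correction to your closing aside: the paper also applies (ii) with $D=\Omega=h_\varphi(\UD)$, a Koenigs domain that may be a slit domain whose closure has interior strictly larger than $\Omega$, so there your direct Hurwitz argument, not the openness argument, is genuinely needed.
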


We will also need one auxiliary lemma. Let $a\in\Real$. Denote by $\rho_a^*(\cdot,\cdot)$ the pseudo-hyperbolic distance in the half-plane $\Pi_a:={\{w:\Im w>a\}}$.

\begin{lemma}\label{LM_Growth}
Let $\beta:\Pi_a\to\C$ be a holomorphic univalent function such that $\beta(w+1)={\beta(w)+1}$ for all ${w\in\Pi_a}$. Then for any given pair of points ${w_0,w_1\in\Pi_a}$, we have
\begin{equation}\label{EQ_Growth}
   \rho_a^*(w_0,w_1)\le\frac{4\rho}{(1-\rho)^2}\big|\beta(w_1)-\beta(w_0)\big|,\quad \text{where~}~\rho:=\rho_a^*(w_0,w_0+1).
\end{equation}
\end{lemma}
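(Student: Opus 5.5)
The plan is to transplant $\beta$ to the unit disc, centred at~$w_0$, and then use the two-sided Koebe growth estimates for univalent functions. The first estimate turns the normalization $\beta(w+1)=\beta(w)+1$ into a lower bound for the derivative at the centre; the second turns that into the claimed lower bound for $|\beta(w_1)-\beta(w_0)|$.

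\emph{Step 1: transplant to~$\UD$.} Let $T$ be the Möbius map of~$\Pi_a$ onto~$\UD$ with $T(w_0)=0$. Since the pseudo-hyperbolic distance is Möbius invariant, we have $|T(w)|=\rho_a^*(w_0,w)$ for all $w\in\Pi_a$. In particular, $|T(w_0+1)|=\rho$ and $|T(w_1)|=\rho_a^*(w_0,w_1)=:r$. The function $g:=\beta\circ T^{-1}$ is univalent in~$\UD$.

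\emph{Step 2: lower bound for $|g'(0)|$.} The Koebe growth theorem gives the upper bound
\[
|g(z)-g(0)|\le |g'(0)|\,\frac{|z|}{(1-|z|)^2},\qquad z\in\UD.
\]
We apply it at $z=T(w_0+1)$. The functional equation gives $g(z)-g(0)=\beta(w_0+1)-\beta(w_0)=1$, so
\[
1\le |g'(0)|\,\frac{\rho}{(1-\rho)^2},\qquad\text{that is,}\qquad |g'(0)|\ge\frac{(1-\rho)^2}{\rho}.
\]
Here $\rho>0$, since $w_0+1\neq w_0$.

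\emph{Step 3: lower growth bound.} The lower Koebe estimate reads
\[
|g(z)-g(0)|\ge |g'(0)|\,\frac{|z|}{(1+|z|)^2}\ge \frac{|g'(0)|\,|z|}{4}.
\]
We apply it at $z=T(w_1)$ and combine with Step~2:
\[
|\beta(w_1)-\beta(w_0)|\ge \frac{(1-\rho)^2}{4\rho}\,r .
\]
Rearranging yields exactly~\eqref{EQ_Growth}.

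There is no real obstacle in this argument. The only points needing care are the identification $|T(w)|=\rho_a^*(w_0,w)$, which holds by the invariance and definition of the pseudo-hyperbolic distance, and the observation that the functional equation pins the increment of~$\beta$ over a hyperbolic distance~$\rho$ to exactly~$1$. That normalization is what makes the derivative bound in Step~2 quantitative.
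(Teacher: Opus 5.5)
Your proposal is correct and is essentially the paper's own proof. The paper also pulls $\beta$ back to $\UD$ by a Möbius map sending $0$ to $w_0$. It then applies the upper Koebe growth bound at the preimage of $w_0+1$, where the increment of $\beta$ is exactly $1$, and the lower bound together with $(1+r)^2\le 4$ at the preimage of $w_1$.
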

\begin{proof}
Let $g(z):=\Re w_0+i(\Im w_0-a)\frac{1+z}{1-z}+ia$, $z\in\UD$, be a conformal map of~$\UD$ onto~$\Pi_a$ with ${g(0)=w_0}$. Denote $\mu:=\big|\beta'(w_0)g'(0)\big|$. Applying the Growth Theorem (see e.g. \cite[Theorem~2.6]{Duren}) to the function
$
   f:=\big(\beta\circ g-\beta(w_0)\big)/(\beta'(w_0)g'(0)),
$
we obtain
$$
 1=|\beta(w_0+1)-\beta(w_0)|\le\mu\rho/(1-\rho)^2
$$
and
$$
  |\beta(w_1)-\beta(w_0)|\ge\frac{\mu\rho_a^*(w_0,w_1)}{\big(1+\rho_a^*(w_0,w_1)\big)^2}\ge
      \frac{\mu\rho_a^*(w_0,w_1)}{4}.
$$
These two inequalities easily imply~\eqref{EQ_Growth}.
\end{proof}

\begin{proof}[Proof of Theorem~\ref{Thm:embedding} for parabolic self-maps] As we mentioned in the introduction, for parabolic self-maps of zero hyperbolic step, the result is already known, see \cite[Theorem~4.4]{CDG-zero-h.step}. So in what follows we suppose that $\varphi\in\Hol(\UD)$ is a parabolic self-map of positive hyperbolic step.

We divide the proof into two parts. In Step~1, we will prove Theorem~\ref{Thm:embedding} under the additional assumption that~$\varphi$ is univalent. In Step~2, we will use Lemma~\ref{Thm:closedtilde} to extend it to the general case.

\medskip\noindent{\textsf{STEP 1.}}  Assume that $\varphi$ is univalent in~$\UD$ and let $(\psi_n)\subset{\Zn(\varphi)\setminus\{\id_\UD\}}$ be a sequence converging locally uniformly in~$\UD$ to the identity map. Note that the mappings~$\psi_n$ do not have to be univalent.

We can suppose without loss of generality that the canonical model of~$\varphi$ is of the form ${(\UH,h_\varphi,z\mapsto z+1)}$. Otherwise, i.e. if ${S_\varphi=-\UH}$, then we replace~$\varphi$ with ${z\mapsto\overline{\varphi(\overline{z})}}$.

Consider the semimodel morphisms $g_n:=g_{\psi_n}$ associated with $\psi_n$'s, ${n\in\Natural}$, see Definition~\ref{DF_semimodel-morphism}. According to Remark~\ref{RM_beta-g_psi}, ${g_n(\UH)\subset\UH}$ and ${g_n\circ h_\varphi}={h_\varphi\circ\psi_n}$.
The univalence of~$\varphi$ implies that its Koenigs function~$h_\varphi$ is also univalent in~$\UD$, see e.g. \cite[Lemma~3.5.4\+(iv)]{Abate2}. Hence, we can write $g_n|_\Omega={h_\varphi\circ\psi_n\circ h_\varphi^{-1}}$, where $\Omega:=h_\varphi(\UD)$. Since the functions $g_n$ map~$\UH$ into itself, they form a normal family in~$\UH$. Recalling that $(\psi_n)$ converges to~$\id_\UD$ and using the Vitali\,--\,Porter Theorem, see e.g. \cite[\S3.2]{NormalFamilies}, we therefore may conclude  that $(g_n)$  converges locally uniformly in~$\UH$ to~$\id_\UH$. As a consequence, $g_n$'s are eventually (i.e. for all~$n$ large enough) univalent in any domain compactly contained in~$\UH$. We are going to prove a stronger statement: for any~${\varepsilon>0}$, the functions $g_n$ are eventually univalent in $\Pi_\varepsilon:={\{w:\Im w>\varepsilon\}}$ for any~${\varepsilon>0}$.

For each ${n\in\Natural}$, denote  $c_n:=c_{\varphi,\psi_n}$. By \cite[Remark~4.23]{CDG-positive-h.step}, $c_n\neq0$. Moreover, as we have shown in \cite[proof of Proposition~4.3]{CDG-positive-h.step},  either ${c_n\in\Real}$ and ${g_n(w)={w+c_n}}$ for all~${w\in\UH}$, or ${c_n\in\UH}$ and then there exists ${F_n\in\Hol(\UD,\UH)}$ with ${F_n(0)=c_n}$ such that $g_n(w)={w+F_n(e^{2\pi i w})}$ for all~${w\in\UH}$. In both cases, we can write
\begin{equation}\label{EQ-from-g-to-f}
  e^{2\pi i g_n(w)}=f_n\big(e^{2\pi i w}\big)
\end{equation}
for all~${w\in\UH}$ and a suitable $f_n\in\Hol(\UD)$ with $f_n(0)=0$. Indeed, in the latter case, this follows from~\cite[Proposition~7.3\+(4)]{CDG-positive-h.step}, while in the former case, our claim holds trivially if we set $f_n(\zeta):=e^{2\pi i c_n}\zeta$ for all ${\zeta\in\UD}$.

Taking into account that $(g_n)$ converges to the identity map and that $f_n$'s form a normal family in~$\UD$, equality~\eqref{EQ-from-g-to-f} allows us to conclude that $(f_n)$ converges locally uniformly in~$\UD$ to~$\id_\UD$.

Since by~\eqref{EQ-from-g-to-f} we have
$$
   g'_n(w)=e^{2\pi i w}f'_n\big(e^{2\pi i w}\big)\big/{f_n\big(e^{2\pi i w}\big)}\quad \text{for all~$~w\in\UH~$ and all~$~n\in\Natural$,}
$$
it follows that for any $\varepsilon>0$, the sequence $(g_n')$ converges uniformly in~$\Pi_\varepsilon$ to~$1$. In particular, given an ${\varepsilon>0}$, we have ${\Re g_n'(w)>0}$ for all ${w\in\Pi_\varepsilon}$ and all~$n$ large enough. By the Noshiro\,--\,Warschawski Criterion (see e.g. \cite[Theorem~2.16]{Duren}) it follows that $g_n$ is univalent in~$\Pi_\varepsilon$ for all such $n$'s.

It further follows that the functions $\beta_n:=\beta_{\varphi,\psi_n}$ are eventually univalent in~$\Pi_\varepsilon$ for any ${\varepsilon>0}$. Indeed, by Remark~\ref{LM_beta-Koenigs}, for each~${n\in\Natural}$, $\beta_n=c_{\varphi,\psi_n}h_{g_n}+b_n$, where $h_{g_n}$ is the Koenigs function of~$g_n$ and ${b_n\in\C}$ is a constant.
By \cite[Proposition~7.3\+(2)]{CDG-positive-h.step}, we have ${g_n(\Pi_\varepsilon)\subset \Pi_\varepsilon}$ for all ${n\in\Natural}$. Since $g_n$ is univalent in~$\Pi_\varepsilon$ for all ${n\in\Natural}$ large enough, by \cite[Lemma~3.5.4\+(iv)]{Abate2} the same holds for~$h_{g_n}$ and hence also for the functions~$\beta_n$. In combination with the equality ${\beta_n(i+1)-\beta_n(i)=1}$, this fact implies that the functions $\beta^*_n:={\beta_n-\beta_n(i)}$ form a normal family in~$\Pi_\varepsilon$ for any ${\varepsilon\in(0,1)}$ and hence also in the whole~$\UH$; see Remark~\ref{RM_univ-normality}.  Therefore, passing to a subsequence, we may suppose that $(\beta^*_n)$ converges locally uniformly in~$\UH$ to some function~$\beta_*$. Since ${\beta_n^*(i)=0}$ and ${\beta_n^*(1+i)=1}$ for all ${n\in\Natural}$, the limit function~$\beta_*$ is not constant. Bearing this in mind and using Theorem~\ref{TH_Hurwitz}, we see that $\beta_*$ is univalent in~$\UH$.

Recall that $\beta_n\circ g_n=\beta_n+c_n$, see Remark~\ref{RM_beta-g_psi}. Taking into account that $(g_n)$ converges to~$\id_\UH$, we get
$$
 c_n=\beta_n\big(g_n(i)\big)-\beta_n(i)=\beta^*_n(g_n(i)) ~\,\longrightarrow\,~ \beta_*(i)=0,\quad \text{as~$~n\to+\infty$.}
$$
Moreover, passing to a further subsequence, we may suppose that
$ \arg c_n\to \theta$ as ${n\to+\infty}$
for some $\theta\in\Real$. Finally, removing a finite number of terms, in what follows we assume that $\beta_n$ is univalent in~$\Pi_{1/2}$ for all~${n\in\Natural}$.

The strategy of the remaining part of Step~1 is as follows. Given ${t\ge0}$, we choose a map $k:\Natural\to\Natural$ such that ${k(n)c_n\to te^{i\theta}}$ as ${n\to+\infty}$. Passing if necessary to a further subsequence, we will show that $(g_n^{\circ k(n)})$ converges to some holomorphic self-map ${q_t^*:\UH\to\UH}$. Moreover, our argument will reveal that $q^*_t$ depends neither on the choice of the map ${k:\Natural\to\Natural}$, nor on the subsequence we pass to. In fact, $q^*_t={\beta_*^{-1}\circ(\beta_*+te^{i\theta})}$. We will also see that $q^*_t$'s commute with ${z\mapsto z+1}$ and that ${q^*_t(\Omega)\subset\Omega}{:=h_\varphi(\UD)}$. This would allow us to construct a non-trivial continuous one-parameter semigroup $(\phi_t)$ contained in $\Zn(\varphi)\cap{\overline{\{\psi_n^{\circ k}:n,k\in\Natural\}}}$ by setting
   $\phi_t:={h_\varphi^{-1}\circ q^*_t\circ h_\varphi}$ for all~${t\ge0}$.

\medskip

   Following the programme described above, fix an arbitrary ${t\ge0}$ and consider the function $k(n):={\lceil t/|c_n|\rceil}$, ${n\in\Natural}$, where $\lceil x \rceil$ stands for the smallest integer greater or equal to~$x$. Recalling that ${c_n\to0}$, it is not difficult to see that ${k(n)|c_n|\to t}$ as ${n\to+\infty}$. As a consequence, ${k(n)c_n\to te^{i\theta}}$ as ${n\to+\infty}$.
For ${n\in\Natural}$ denote
   $$
      q_n:=g_n^{\circ k(n)}.
   $$
   These functions form a normal family in~$\UH$. Therefore, passing to a subsequence we may assume that $(q_n)$ converges locally uniformly in~$\UH$ to some map~$q_t^*$. Bearing in mind that $q_n$'s commute with ${z\mapsto z+1}$ and using Theorem~\ref{TH_Hurwitz}, we see that $q^*_t$ is either a non-constant holomorphic self-map of~$\UH$ commuting with ${z\mapsto z+1}$, or $q^*_t\equiv\infty$.

The latter alternative is impossible. Indeed,
       ${\beta_n\big(q_n(w)\big)-\beta_n(w)}=k(n)c_n$
for all~$n\in\Natural$ and any ${w\in\UH}$. Since $\lim_{n\to+\infty}k(n)|c_n|=t$, we have
    $$
     b:=\tfrac32+4\sup\nolimits_{n\in\Natural}k(n)|c_n|~<~+\infty.
    $$
    Now fix some $n\in\Natural$ and set $w_0:=ib$, $w_1:=q_n(w_0)$.
    Note that $\Im w_1\ge\Im w_0\ge 3/2$ and hence, $w_0,w_1\in\Pi_{1/2}$. Using the inequality  $\rho^*_{1/2}(w_0,w_0+1)<(2\Im w_0-1)^{-1}$, we can
    apply Lemma~\ref{LM_Growth} for the function ${\beta:=\beta_n}$ in $\Pi_{1/2}$ to show that $q_n(w_0)$ stays within pseudo-hyperbolic distance~$1/2$  from~$w_0$. Thus, the limit function of $(q_n)$ has to be a self-map of~$\UH$.
In particular, using this fact, we can pass in
${\beta_n\circ q_n}={\beta_n+k(n)c_n}$ to the limit as ${n\to+\infty}$.
This yields ${\beta_*\circ q_t^*=\beta_*+te^{i\theta}}.$

Recall that ${g_n\circ h_\varphi}={h_\varphi\circ\psi_n}$, see again Remark~\ref{RM_beta-g_psi}. As a consequence, ${g_n(\Omega)\subset\Omega}:=h_\varphi(\UD)$ and hence also ${q_n(\Omega)\subset\Omega}$ for all~${n\in\Natural}$. Bearing in mind that the limit map~$q_t^*$ is not constant, one can appeal again to Theorem~\ref{TH_Hurwitz} to show that ${q_t^*(\Omega)\subset\Omega}$. This allows us to construct a holomorphic self-map ${\phi_t:\UD\to\UD}$ by setting $\phi_t:={h_\varphi^{-1}\circ q_t^*\circ h_\varphi}$.
    Then
    \begin{equation}\label{EQ_phi_t-formula}
       \phi_t=(\beta_*\circ h_\varphi)^{-1}\circ(\beta_*\circ h_\varphi+te^{i\theta}).
    \end{equation}
Moreover, we can pass to the limit as~${n\to+\infty}$ in the equality
$$
   \psi_n^{\circ k(n)}=h_\varphi^{-1}\circ q_n\circ h_\varphi
$$
to see that $\psi_n^{\circ k(n)}\to \phi_t$ as $n\to+\infty$.  This shows that $\phi_t$ is contained in the closure of~$\{\psi_n^{\circ k}:n,k\in\Natural\}$. In view of Lemma~\ref{LM_closed}, it further follows that ${\phi_t\in\Zn(\varphi)}$.

It remains to observe that the functions~$\phi_t$ constructed above, as it can be easily checked using formula~\eqref{EQ_phi_t-formula}, form a non-trivial continuous one-parameter semigroup~$(\phi_t)$ in~$\UD$.
This concludes the proof in case $\varphi$ is univalent.

\bigskip	
\noindent\textsf{STEP 2}. Now we provide a proof for the general case. So suppose that $\varphi\in\Hol(\UD)$ is parabolic of positive hyperbolic step, but not necessarily univalent, and that there exists a sequence $(\psi_n)\subset{\Zn(\varphi)\setminus\{\id_\D\}}$ converging locally uniformly in~$\D$ to~$\id_\D$. Consider the self-map $\widetilde\varphi$ and the unital semigroup homomorphism ${\mathcal R_{\varphi}:\Zn(\varphi)\to\Zn(\widetilde\varphi)}$ defined by~\eqref{EQ_varphi-tilde} and~\eqref{EQ_the-tilde-map}, respectively. Recall that $\widetilde\varphi$ is univalent in~$\UD$. Moreover, by \cite[Proposition~2.11\+(2)]{CDG-zero-h.step}, $\widetilde\varphi$ is parabolic of positive hyperbolic step. Hence, taking into account Lemma~\ref{Thm:closedtilde},  the result proved in Step~1 can be applied to the self-map~$\widetilde\varphi$ and the sequence ${(\widetilde\psi_n)\subset\Zn(\widetilde\varphi)\setminus\{\id_\UD\}}$, where ${\widetilde\psi_n:=\mathcal R_\varphi(\psi_n)}$ for all ${n\in\Natural}$.

This shows that the relative closure of $${\{\widetilde\psi_n^{\circ k}:n,k\in\N\}}={\mathcal R_\varphi\big(\{\psi_n^{\circ k}:n,k\in\N\}\big)}$$ in~$\Zn(\widetilde\varphi)$ contains a non-trivial continuous one-parameter semigroup~$(\widetilde\phi_t)$. Lemma~\ref{Thm:closedtilde} implies that $\mathcal R_\varphi\big(\Zn(\varphi)\big)$ is relatively closed in~$\Zn(\widetilde\varphi)$. Therefore, ${(\widetilde\phi_t)\subset\mathcal R_\varphi\big(\Zn(\varphi)\big)}$. Moreover, again by Lemma~\ref{Thm:closedtilde}, the map $\mathcal R_\varphi$ is injective and closed. Therefore, it admits a continuous left inverse
$
 {\mathcal L_\varphi: \mathcal R_\varphi\big(\Zn(\varphi)\big) \to \Zn(\varphi),}
$
which is injective and continuous. Moreover, it is easy to see that $\mathcal L_\varphi$ is a unital semigroup homomorphism. It follows that the family $(\phi_t)_{t\ge0}$ defined by ${\phi_t:=\mathcal L_\varphi(\widetilde\phi_t)}$ for all ${t\ge0}$ is a non-trivial continuous one-parameter semigroup contained in~$\Zn(\varphi)$. It remains to mention that
the univalence of~$\varphi$ follows now by~\cite[Proposition~3.3]{CDG-Embedding}.
\end{proof}

\section{Embeddability of self-maps symmetric w.r.t. $\Real$}\label{S_real}
As we mentioned in the introduction, existence of a non-trivial continuous one-parameter semigroup contained in the centralizer $\Zn(\varphi)$ of a non-elliptic self-map ${\varphi\in\Hol(\UD)}$ is necessary and sufficient for embeddability only if $\varphi$ is hyperbolic, and it fails to be sufficient for embbedability of a parabolic self-map~$\varphi$.

Corollary~\ref{Cor:embedding} asserts that the presence of a boundary fixed point other than the Denjoy\,--\,Wolff point serves as an additional condition ensuring embeddability in the parabolic case. The reason for this phenomenon is that according to \cite[Theorem~1.4]{CDG-Embedding}, a non-elliptic self-map (different from a hyperbolic automorphism and) satisfying this addition condition commutes with a non-trivial continuous one-parameter semigroup~$(\phi_t)$ if and only if it is an element of~$(\phi_t)$.

In this section we give another simple additional condition that ensures embeddability, although in this case, the one-parameter semigroup constructed in the proof of Theorem~\ref{Thm:embedding} does not have to contain~$\varphi$; see Remark~\ref{RM_many-semigroups} in the end of this section.

\begin{corollary}\label{CR_another-add-condition}
Let $\varphi\in\Hol(\UD)$ be a non-elliptic self-map such that $\id_\UD$ is not isolated in~$\Zn(\varphi)$.
Suppose additionally that
\begin{equation}\label{EQ_another-add-condition}
\varphi\big((-1,1)\big)\subset(-1,1).
\end{equation}
Then $\varphi$ is embeddable.
\end{corollary}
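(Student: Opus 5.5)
The plan is to split according to the type of~$\varphi$. If $\varphi$ is hyperbolic, the proof of Theorem~\ref{Thm:embedding} in Subsection~\ref{SS_hyperbolic} already gives $\varphi=\phi_1$ for the semigroup constructed there; for hyperbolic automorphisms we cite \cite[Proposition~6.9]{CDG-Centralizer}. So only the parabolic case needs work, and there I will use~\eqref{EQ_another-add-condition} to exploit the symmetry of~$\varphi$.

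\emph{Symmetry.} By~\eqref{EQ_another-add-condition} and the Identity Principle, $\varphi^\ast(z):=\overline{\varphi(\bar z)}$ coincides with~$\varphi$. Given the canonical model $(S_\varphi,h_\varphi,z\mapsto z+1)$, the triple $\big(\overline{S_\varphi},\,\overline{h_\varphi(\bar z)},\,z\mapsto z+1\big)$ is again a holomorphic model of $\varphi^\ast=\varphi$. First I will rule out positive hyperbolic step. In that case $S_\varphi=\UH$ and the conjugate base is $-\UH$, so Theorem~\ref{Thm:uniqness} would give a biholomorphism $\beta:\UH\to-\UH$ with $\beta(w+1)=\beta(w)+1$. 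Writing $\beta(w)=w+u(e^{2\pi i w})$ with $u$ holomorphic on the punctured disc $\D\setminus\{0\}$ (this follows from the translation relation), one should check that such a~$\beta$ cannot exist; this is the step I expect to be the main obstacle. Alternatively, one can argue that the normalization in Remark~\ref{RM_normalization} is respected by conjugation while $\UH$ and $-\UH$ are distinguished there, so they cannot both be the base space. Either way, $\varphi$ must be parabolic of zero hyperbolic step. Then $S_\varphi=\C$, and uniqueness of the normalized model ($h(0)=0$) gives $h_\varphi(\bar z)=\overline{h_\varphi(z)}$, so $\Omega:=h_\varphi(\UD)$ is symmetric with respect to~$\Real$.

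\emph{Semigroup in the zero-step case.} By Theorem~\ref{Thm:embedding}, $\varphi$ is univalent, hence so is~$h_\varphi$, and $\Zn(\varphi)$ contains a non-trivial continuous semigroup~$(\phi_t)$. By Proposition~\ref{PR_hyperbolic_and_zero_step}, $h_\varphi\circ\phi_t=h_\varphi+c(t)$, and the semigroup law with continuity forces $c(t)=tc$ for some $c\in\C\setminus\{0\}$. Hence $\Omega+tc\subset\Omega$ for all $t\ge0$. Conjugating by the symmetry gives $\Omega+t\bar c\subset\Omega$, and composing the two gives $\Omega+2t\Re c\subset\Omega$. We also have $\Omega+1\subset\Omega$. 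I then distinguish three cases for the sign of~$\Re c$.
\begin{itemize}
\item If $\Re c>0$, then $\Omega+s\subset\Omega$ for all $s\ge0$ directly.
\item If $\Re c<0$, then $\Omega$ is invariant under translations by negative reals and by~$1$, hence by all reals.
\item If $\Re c=0$, then $\Omega$ is invariant under both $\pm i\Real$ translations and under~$+1$. So $\Omega$ is a union of vertical lines and is a half-plane $\{\Re w>a\}$, since $\Omega\ne\C$ because $h_\varphi$ is univalent on~$\UD$. A half-plane of this form is again invariant under $w\mapsto w+s$ for $s\ge0$.
\end{itemize}

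\emph{Conclusion.} In every case $\Omega+s\subset\Omega$ for all $s\ge0$. Therefore $\Phi_s:=h_\varphi^{-1}\circ(h_\varphi+s)$ is a well-defined continuous one-parameter semigroup of holomorphic self-maps of~$\UD$; the algebraic laws and continuity are immediate from the univalence of~$h_\varphi$. Since $\Phi_1=h_\varphi^{-1}\circ(h_\varphi+1)=\varphi$, the map $\varphi$ is embeddable.
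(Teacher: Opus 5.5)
Your argument is correct. The hyperbolic case is handled as in the paper, but in the parabolic case your route differs at three points.

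First, the paper excludes positive hyperbolic step dynamically. The orbit of~$0$ stays in $(-1,1)$, so it tends to $\tau=\pm1$ non-tangentially, which a parabolic map of positive step does not allow (Abate's Corollary~4.6.10). You instead observe that $\varphi=\varphi^\#$ turns the canonical model into a model with base space $-S_\varphi$. The step you flagged as the main obstacle is actually immediate, and the expansion $w+u(e^{2\pi i w})$ is not needed. Any biholomorphism $\beta:\UH\to-\UH$ equals $-T$ with $T\in\mathrm{Aut}(\UH)$, and $\beta(w+1)=\beta(w)+1$ becomes $T(w+1)=T(w)-1$. Evaluating at $\infty$ gives $T(\infty)=\infty$, so $T(w)=aw+b$ with $a>0$, whereas the relation forces $a=-1$. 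Your fallback via the uniqueness statement in Remark~\ref{RM_normalization} is also legitimate, since it encodes exactly this fact.

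Second, you obtain $h_\varphi^\#=h_\varphi$ from the uniqueness of the normalized model, without using univalence. The paper instead uses univalence and the uniqueness theorem for univalent solutions of Abel's equation (Theorem~3.1 of the cited paper by Contreras, D\'{\i}az-Madrigal and Pommerenke).

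Third, you apply Proposition~\ref{PR_hyperbolic_and_zero_step} only to the $\phi_t$'s and solve Cauchy's equation for $c(t)$. You also need not exclude $\Re c<0$: then $\Omega$ is invariant under all real translations, which still suffices. The paper relies instead on the description $\Zn(\varphi)=\{h_\varphi^{-1}\circ(h_\varphi+c):c\in\mathcal A_\varphi\}$, the univalence of all elements of the centralizer, and the topological isomorphism $\mathfrak T_\varphi$. It then uses the inclusion $\mathcal A_\varphi\subset\C\setminus(-\infty,0)$ to show that $\Re\mu<0$ cannot occur.

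So your proof is more self-contained and leans on fewer external results. The paper's version places the argument within the known structure of $\Zn(\varphi)$ and yields the extra information that the case $\Re\mu<0$ is void.
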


\begin{proof}
If $\varphi$ is hyperbolic, then there is nothing to prove because as we have shown in Section~\ref{SS_hyperbolic}, in such a case the desired conclusion holds even without the additional hypothesis~\eqref{EQ_another-add-condition}.

Therefore, we may assume that $\varphi$ is parabolic. Moreover, thanks to condition~\eqref{EQ_another-add-condition}, we know that the Denjoy\,--\,Wolff point of~$\varphi$ is either ${\tau=1}$ or~${\tau=-1}$ and that the orbit ${\big(\varphi^{\circ n}(0)\big)_{n\in\Natural}}$ of the point~$0$ converges to~$\tau$ non-tangentially. It follows that the parabolic self-map~$\varphi$ is necessarily of zero hyperbolic step, see e.g. \cite[Corollary~4.6.10]{Abate2}. Finally, $\varphi$ is univalent in~$\UD$ by Theorem~\ref{Thm:embedding}.

For a holomorphic function $f\in\Hol(\UD,\C)$ we define
$$
 f^\#(z):=\overline{f(\xoverline[1.05] z)},\quad z\in\UD.
$$
Note that $\varphi^\#$ agrees with~$\varphi$ in~$(-1,1)$ by~\eqref{EQ_another-add-condition}. In view of the Identity Principle, we have ${\varphi^\#=\varphi}$ everywhere in~$\UD$. It follows that
$$
  h_\varphi^\#\circ\varphi \,=\, h_\varphi^\#\circ\varphi^\# \,=\, (h_\varphi\circ\varphi)^\#
                            \,=\, (h_\varphi+1)^\# \,=\, h_\varphi^\# + 1.
$$
Since $\varphi$ is univalent in~$\UD$, we have that $h_\varphi$ and hence also~$h_\varphi^\#$ are univalent in~$\UD$, see e.g. \cite[Lemma~3.5.4\+(iv)]{Abate2}. Moreover, $h_\varphi^\#(0)={\overline{h_\varphi(0)}=0}$. By \cite[Theorem~3.1]{CDP-Abel}, it follows that ${h_\varphi=h_\varphi^\#}$. Hence ${\Omega:=h_\varphi(\UD)}$ satisfies
\begin{equation}\label{EQ_Omega-symm}
  \Omega\,=\,\Omega^\#:=\big\{\xoverline w\,:\,w\in\Omega\big\}.
\end{equation}
Since $\varphi$ is a univalent parabolic self-map of zero hyperbolic step, all elements of $\Zn(\varphi)$ are univalent, see \cite[Theorem~4.5\+(1)]{CDG-zero-h.step}. Taking this into account, by \cite[Remark~5.3]{CDG-Centralizer} we further have
$$
  \Zn(\varphi)=\big\{h_\varphi^{-1}\circ(h_\varphi+c)\,:\,c\in\mathcal A_\varphi\big\},\quad
     \mathcal A_\varphi:=\{c\in\Complex:\Omega+c\subset\Omega\}\subset\C\setminus(-\infty,0).
$$
If $c\in\mathcal A_\varphi$, then taking into account~\eqref{EQ_Omega-symm}, we have
$$
  \Omega+\xoverline[0.75]{c\,} \,=\, (\Omega+c)^\# \,\subset\, \Omega^\# \,=\, \Omega.
$$
Hence, $\mathcal A_\varphi = \mathcal A_\varphi^\#:=\{\xoverline[0.75]{c\,}\,:\,c\in\mathcal A_\varphi\}$.

Observe that $\mathcal A_\varphi$ is a subsemigroup of~$[\C,+]$, i.e. if ${c_1,c_2\in\mathcal A_\varphi}$, then ${c_1+c_2\in\mathcal A_\varphi}$, see \cite[Theorem~3.1\+(A)]{CDG-Centralizer}.
Moreover, by \cite[Theorem~5.2]{CDG-Centralizer}, the map
$$
  \mathfrak T_\varphi:\,\mathcal A_\varphi\ni c \mapsto{h_\varphi^{-1}\circ(h_\varphi+c)\in\Zn(\varphi)}
$$
is an isomorphism between the topological semigroups ${[\mathcal A_\varphi,+]}$ and ${[\Zn(\varphi),\,\circ\,]}$ endowed with the topologies inherited from~$\C$ and from $\Hol(\UD)$, respectively.
By Theorem~\ref{Thm:embedding}, the centralizer~$\Zn(\varphi)$ contains a non-trivial continuous one-parameter semigroup~$(\phi_t)$. The function
$$
  [0,+\infty)\ni t \mapsto \mathfrak T^{-1}_\varphi(\phi_t) \ni \mathcal A_\varphi
$$
is additive and continuous. Therefore, see e.g. \cite{Additive}, there exists ${\mu\in\C}$ such that $\mathfrak T^{-1}_\varphi(\phi_t)=\mu t\in\mathcal A_\varphi$ for all~${t\ge0}$. Since ${\mathcal A_\varphi = \mathcal A_\varphi^\#}$, we have ${\xoverline
\mu t\in\mathcal A_\varphi}$ and hence also
\begin{equation}\label{EQ_c.c.}
  \mu t + \xoverline\mu t= 2t\Re \mu\in\mathcal A_\varphi \quad\text{for any~}~t\ge0.
\end{equation}

 The rest of the proof depends on the sign of~$\Re\mu$.

\StepC{1}{$\Re\mu>0$}
In this case, \eqref{EQ_c.c.} implies that~$[0,+\infty)\subset\mathcal A_\varphi$, which means that ${\Omega+t\subset\Omega}$ for any ${t\ge0}$. This condition is known to be equivalent to the embeddability of~$\varphi$, see e.g. \cite[Remark~2.13]{CDG-Centralizer}.

\StepC{2}{$\Re\mu<0$} This situation is impossible. Indeed, if $\Re\mu<0$, then~\eqref{EQ_c.c.} would imply that ${(-\infty,0]\subset\mathcal A_\varphi}$. However, $\varphi$ is parabolic of zero hyperbolic step and hence, it is not an automorphism of~$\UD$. Therefore,  ${\mathcal A_\varphi\subset\C\setminus(-\infty,0)}$ by~\cite[Theorem~3.1\+(B)]{CDG-Centralizer}.

\StepC{3}{$\Re \mu=0$} Note that $\mu\neq0$ because the semigroup $(\phi_t)$ is non-trivial. What we have proved above shows that $\mathcal A_\varphi$ contains the set $\{\mu t,\xoverline\mu t \colon t\ge0\}$, i.e. the whole imaginary axis~$\mathbb I:={\{iy:y\in\Real\}}$. As a consequence, $\Omega$ is a union of straight lines parallel to~$\mathbb I$. Combining this with the fact that ${\Omega\subset\C}$ is a connected open set different from~$\C$ and satisfying ${\Omega+1}={h_\varphi(\UD)+1}={h_\varphi(\varphi(\UD))\subset\Omega}$, it is not difficult to see that $\Omega$ is a half-plane of the form ${\{w:\Re w>a\}}$, for some ${a\in\Real}$. In particular, ${\Omega+t\subset\Omega}$ for any~${t\ge0}$. Thus, also in this case, we may conclude that $\varphi$ is embeddable.
\end{proof}
\begin{remark}\label{RM_many-semigroups}
It is worth mentioning that in contrast to Corollary~\ref{Cor:embedding}, the hypothesis of Corollary~\ref{CR_another-add-condition} does not imply that any non-trivial continuous one-parameter semigroup  contained in the centralizer~$\Zn(\varphi)$ is a reparametrization of the unique continuous one-parameter semigroup~$(\phi_t)$ satisfying ${\phi_1=\varphi}$. As an example, consider the situation of Case~3 in the above proof, when ${\Omega=\{w:\Re w>a\}}$ for some ${a\in\Real}$. In such a case, ${\Omega+te^{i\theta} \subset\Omega}$ for all ${t\ge0}$ and all~${\theta\in[-\pi/2,\pi/2]}$. Therefore, $\Zn(\varphi)$ contains an infinite family of one-parameter semigroups~$(\phi_t^\theta)$ given by $\phi_t^\theta:=h_\varphi^{-1}\circ(h_\varphi+te^{i\theta})$, ${t\ge0}$, ${\theta\in[-\pi/2,\pi/2]}$.
\end{remark}

\section{Concluding remarks}\label{S_concluding}
Denote by $\mathcal G(\Hol(\UD))$ the class of all infinitesimal generators of (arbitrary) continuous one-parameter semigroups in~$\UD$.

Let $\varphi\in\Hol(\UD)$ be a non-elliptic self-map and let $\mathcal G(\Zn(\varphi))$ stand for the subset of $\mathcal G(\Hol(\UD))$ formed by the infinitesimal generators of the one-parameter semigroups contained in~$\Zn(\varphi)$. According to the Berkson\,--\,Porta representation formula, see e.g. \cite[Theorem~10.1.10]{BCD-Book}, if $G\in\mathcal G(\Hol(\UD))$ does not vanish identically, then it has at most one zero in~$\UD$. If $G$ has no zero in~$\UD$, then it generates a non-elliptic one-parameter semigroup~$(\phi_t)$. For this case, we proved in \cite[Section~3]{CDG-Embedding} that ${(\phi_t)\subset\Zn(\varphi)}$ if and only if
\begin{equation}\label{EQ_G-condition}
  G\circ\varphi=\varphi'G.
\end{equation}
In the other case, i.e. when $G$ has a unique zero at some ${\tau\in\UD}$, the situation is simpler: by Behan's result, see Section~\ref{SS_commuting}, $(\phi_t)\not\subset\Zn(\varphi)$ because $\phi_t$'s are elliptic; in turn, $G$ fails to satisfy~\eqref{EQ_G-condition} because otherwise it would vanish also at ${\varphi(\tau)\neq\tau}$.

Thus,  we have the following characterization of $\mathcal G(\Zn(\varphi))$.

\begin{proposition}\label{PR_G-condition}
 Let $\varphi\in\Hol(\UD)$ be a non-elliptic self-map. Then
 $$
    \mathcal G(\Zn(\varphi))~=~\big\{G\in\mathcal G(\Hol(\UD)):\text{\rm $G$ satisfies equation~\eqref{EQ_G-condition}}\big\}.
 $$
\end{proposition}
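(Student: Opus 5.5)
The text immediately preceding the statement already contains most of the argument. We have to show two inclusions between $\mathcal G(\Zn(\varphi))$ and the set of generators satisfying~\eqref{EQ_G-condition}. The trivial generator $G\equiv0$ belongs to both sets: it generates the trivial semigroup $\phi_t=\id_\UD\in\Zn(\varphi)$ and satisfies~\eqref{EQ_G-condition} identically. So fix $G\in\mathcal G(\Hol(\UD))$, $G\not\equiv0$, and let $(\phi_t)$ be the continuous one-parameter semigroup it generates. By the Berkson\,--\,Porta formula, $G$ has at most one zero in~$\UD$. We split into two cases accordingly.

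\emph{Case 1: $G$ has no zeros in~$\UD$.} Then $(\phi_t)$ is non-elliptic. The equivalence ${(\phi_t)\subset\Zn(\varphi)}\iff G\circ\varphi=\varphi'G$ is exactly the result of \cite[Section~3]{CDG-Embedding}, which I would quote.

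To make the argument self-contained, I would sketch why it holds.
\begin{itemize}
\item[$(\Rightarrow)$] If $\varphi\circ\phi_t=\phi_t\circ\varphi$ for all $t\ge0$, differentiate in $t$ at $t=0$, using $\di\phi_t/\di t=G\circ\phi_t$. This gives $\varphi'(z)G(z)=G(\varphi(z))$.
\item[$(\Leftarrow)$] Conversely, if~\eqref{EQ_G-condition} holds, then for fixed $z$ both curves $t\mapsto\varphi(\phi_t(z))$ and $t\mapsto\phi_t(\varphi(z))$ solve the initial value problem $\dot w=G(w)$, $w(0)=\varphi(z)$. Indeed, $\frac{\di}{\di t}\varphi(\phi_t(z))=\varphi'(\phi_t(z))G(\phi_t(z))=G(\varphi(\phi_t(z)))$. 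Uniqueness of solutions of this holomorphic ODE then gives commutation.
\end{itemize}
This ODE argument works equally well without the non-vanishing assumption, so it could even replace the case split.

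\emph{Case 2: $G$ has a (unique) zero $\tau\in\UD$.} Then every $\phi_t$ fixes~$\tau$, and $\phi_t\ne\id_\UD$ for some~$t$ since $G\not\equiv0$. Such a $\phi_t$ is elliptic with Denjoy\,--\,Wolff point $\tau\in\UD$. By the remark in Section~\ref{SS_commuting}, if it commuted with $\varphi$, then $\varphi$ would be elliptic, a contradiction; hence $G\notin\mathcal G(\Zn(\varphi))$.

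On the other side, if $G$ satisfied~\eqref{EQ_G-condition}, evaluating at $\tau$ would give $G(\varphi(\tau))=\varphi'(\tau)G(\tau)=0$. Since $\varphi$ has no fixed point in $\UD$, we have $\varphi(\tau)\ne\tau$, so $G$ would have a second zero, contradicting uniqueness. Thus $G$ lies in neither set.

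Combining the cases gives the equality. The only point requiring care is the justification of the ODE uniqueness and differentiation steps in Case~1. These are routine since $G$ is holomorphic and $(\phi_t)$ is differentiable in $t$ \cite[Theorem~10.1.4]{BCD-Book}. So I expect no real obstacle beyond citing \cite{CDG-Embedding} correctly.
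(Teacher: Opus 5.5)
Your proposal is correct and follows the paper's argument. You use the Berkson--Porta dichotomy, quote the equivalence from the authors' earlier work when $G$ is zero-free, and, when $G$ vanishes at some $\tau\in\UD$, observe that the $\phi_t$ are elliptic and so cannot commute with $\varphi$, while \eqref{EQ_G-condition} would force a second zero at $\varphi(\tau)\neq\tau$; your ODE-uniqueness sketch of the quoted equivalence is also valid and, as you note, works for every generator, so it would make the case split unnecessary.
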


Since~$\mathcal G(\Hol(\UD))$ is a convex cone and since it is closed in the topology of~$\Hol(\UD,\C)$, see e.g. \cite[Corollary~10.2.7]{BCD-Book},  Proposition~\ref{PR_G-condition} implies the following statement.
\begin{corollary}\label{CR_convex-cone}
 Let $\varphi\in\Hol(\UD)$ be a non-elliptic self-map. Then $\mathcal G(\Zn(\varphi))$ is a topologically closed convex cone in~$\Hol(\UD,\C)$.
\end{corollary}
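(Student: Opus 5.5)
By Proposition~\ref{PR_G-condition}, $\mathcal G(\Zn(\varphi))$ is the intersection of $\mathcal G(\Hol(\UD))$ with
$$
\mathcal L_\varphi:=\{G\in\Hol(\UD,\C):\ G\circ\varphi=\varphi'G\}.
$$
I will show that $\mathcal L_\varphi$ is a closed linear subspace of $\Hol(\UD,\C)$ and then use the cone and closedness properties of $\mathcal G(\Hol(\UD))$ quoted just before the statement (\cite[Corollary~10.2.7]{BCD-Book}).

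\emph{Linearity.} The map $G\mapsto G\circ\varphi-\varphi'G$ is linear in $G$. Hence its kernel $\mathcal L_\varphi$ is a linear subspace. In particular, $\mathcal L_\varphi$ is closed under sums and under multiplication by non-negative reals.

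\emph{Closedness.} Let $G_n\to G$ locally uniformly in $\UD$. Then $G_n\circ\varphi\to G\circ\varphi$ locally uniformly, because $\varphi$ maps compact subsets of $\UD$ into compact subsets of $\UD$. Also $\varphi'G_n\to\varphi'G$ locally uniformly, since $\varphi'$ is bounded on compacta. Passing to the limit in $G_n\circ\varphi=\varphi'G_n$ therefore gives $G\in\mathcal L_\varphi$, so $\mathcal L_\varphi$ is closed.

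\emph{Conclusion.} The intersection of a convex cone with a linear subspace is again a convex cone. If $G_1,G_2\in\mathcal G(\Zn(\varphi))$ and $a,b\ge0$, then $aG_1+bG_2$ is a generator, because $\mathcal G(\Hol(\UD))$ is a convex cone. It also satisfies~\eqref{EQ_G-condition}, by linearity, so it lies in $\mathcal G(\Zn(\varphi))$. Likewise, the intersection of two closed subsets of $\Hol(\UD,\C)$ is closed. The only point needing care is that the zero function is a generator (of the trivial semigroup) and trivially satisfies~\eqref{EQ_G-condition}. This causes no problem in Proposition~\ref{PR_G-condition}, since the trivial semigroup lies in $\Zn(\varphi)$. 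I do not expect any real obstacle: all the substantive work is already contained in Proposition~\ref{PR_G-condition} and in the quoted closedness of $\mathcal G(\Hol(\UD))$.
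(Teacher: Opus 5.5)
Your proposal is correct and follows the paper's own argument. The paper likewise obtains the corollary from Proposition~\ref{PR_G-condition} and the fact that $\mathcal G(\Hol(\UD))$ is a closed convex cone. You additionally write out why the solution set of~\eqref{EQ_G-condition} is a closed linear subspace, which the paper leaves implicit.
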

This corollary implies a couple of further consequences.     We say that one-parameter semigroups $(\phi_t^1)$ and $(\phi_t^2)$ are \dff{essentially distinct} if ${\{\phi_t^1:t\ge0\}}\neq{\{\phi_t^2:t\ge0\}}$.

\begin{remark}\label{RM_convex-cone}
Suppose $\varphi$ is non-elliptic, but not a hyperbolic automorphism. If $\Zn(\varphi)$ contains two essentially distinct non-trivial continuous one-parameter semigroups, then $\Zn(\varphi)$ contains infinitely many pairwise essentially distinct non-trivial continuous one-parameter semigroups. Indeed, if $G_0$ and $G_1$ are infinitesimal generators of these two one-parameter semigroups, then $G_\lambda:={\lambda G_1+(1-\lambda)G_0}\in\mathcal G(\Zn(\varphi))$ for any $\lambda\in[0,1]$. Since the semigroups generated by $G_0$ and $G_1$ are essentially distinct, $G_1$ is not a positive multiple of~$G_0$, i.e. it is not of the form~$\kappa G_0$, where $\kappa>0$ is a constant. It follows that if ${\mu,\lambda\in[0,1]}$ and ${\mu\neq\lambda}$, then $G_\mu$ is not a positive multiple of~$G_\lambda$ either. As a result, the one-parameter semigroups generated by $G_\mu$ and $G_\lambda$ are essentially distinct.
\end{remark}

\begin{remark}
Suppose now that $\varphi\in\Hol(\UD)$ is a (univalent) parabolic self-map of positive hyperbolic step and let $(\phi_t)$ be a non-trivial continuous one-parameter semigroup contained in~$\Zn(\varphi)$. Combining \cite[Proposition~3.3]{CDG-Embedding} with Theorem~\ref{TH_SL0} and taking into account the uniqueness part in Theorem~\ref{TH_SL-positive-step}, it is not difficult to see that $c_{\varphi,\phi_t}=at$ for all ${t\ge0}$ and some constant ${a\in\Complex\setminus\{0\}}$, and that the simultaneous linearization function~$\beta_{\varphi,\phi_t}$ is the same for all~${t>0}$: in fact, the Koenigs function~$H$ of~$(\phi_t)$ satisfies
\begin{equation}\label{EQ_H}
aH-{\beta_{\varphi,\phi_t}\circ h_\varphi}=\mathrm{const},
\end{equation}
which uniquely determines $\beta_{\varphi,\phi_t}$. Recall that the infinitesimal generator of~$(\phi_t)$ is given by ${G=1/H'}$. Bearing this in mind and taking into account that ${\beta:=\beta_{\varphi,\phi_t}}$ satisfies~\eqref{EQ_beta(d)}, it is not difficult to deduce from~\eqref{EQ_H} that two non-trivial continuous one-parameter semigroups contained in~$\Zn(\varphi)$ have different simultaneous linearization functions if and only if their infinitesimal generators are not constant multiples of each other.
The argument used in Remark~\ref{RM_convex-cone} shows then that if there are at least two distinct simultaneous linearization functions associated to non-trivial continuous one-parameter semigroups contained in~$\Zn(\varphi)$, then there are infinitely many such simultaneous linearization functions.
\end{remark}

\begin{example}\label{EX_example}
Let $h$ be a conformal map of~$\UD$ onto the first quadrant $\{w:0<\arg w<\pi/2\}$. Then the Koenigs map $h_\varphi$ of $\varphi:={h^{-1}\circ(h+1)}$ is given by ${h_\varphi=h-\Re h(0)}$. In particular, $\varphi$ is parabolic of positive hyperbolic step and ${S_\varphi=\UH}$. It is not difficult to see that for any~$\alpha\in[0,\pi/2]$, the formula ${\phi_t^\alpha:=h_\varphi^{-1}\circ(h_\varphi+te^{i\alpha})}$ defines a non-trivial continuous one-parameter semigroup $(\phi_t^\alpha)$ contained in~$\Zn(\varphi)$. The corresponding simultaneous linearization function is  ${\beta_{\varphi,\phi_t^\alpha}=\id_\UH}$, the same for all~$\alpha\in[0,\pi/2]$.

Further, let $\beta_0$ be the conformal map of~$\UH$ onto $D:=\C\setminus\bigcup_{k\in\mathbb Z}\{k+\frac{1}{2}+iy:{y\le \tfrac{\log 2}{\pi}}\}$
given by
$$
 \beta_0(w)~:=~w\,-\,\tfrac{1}{\pi i}\,\log\big(1-e^{2\pi i w}\big) ~=~\tfrac12\,+\,\tfrac{i}{2\pi}\,\log\big(4\sin^2(\pi w)\big), \quad w\in\UH,
 $$
where the branches of the logarithm are chosen in such a way that ${\beta_0(w)-w\to0}$ as ${\Im w\to+\infty}$. Note that ${\beta_0(h(\UD))=D\cap\UHR}$. In particular, ${\beta_0(h(\UD))+it}\subset{\beta_0(h(\UD))}$ for all~${t\ge0}$. It follows that the formula
$$
  \psi_t:=(\beta_0\circ h)^{-1}\circ\big(\beta_0\circ h+it\big)=
   (\beta\circ h_\varphi)^{-1}\circ\big(\beta\circ h_\varphi+it\big),
$$ where
                    $\beta(w):=\beta_0(w+\Re h(0))-\Re h(0)$,
defines a non-trivial continuous one-parameter semigroup~$(\psi_t)$. Moreover, using the identity $\beta(w+1)={\beta(w)+1}$ for all ${w\in\UH}$, it is not difficult to check that ${(\psi_t)\subset\Zn(\varphi)}$.
By construction and by uniqueness of the simultaneous linearization function, see Theorem~\ref{TH_SL-positive-step}, we have ${\beta_{\varphi,
\psi_t}=\beta}$ for all ${t>0}$.

Taking convex combinations of $G_0(z):=1/h_\varphi'(z)$ and $G_1(z):=iG_0(z)/\beta'(h_\varphi(z))$,
which are the infinitesimal generators of $(\phi_t^0)$ and $(\psi_t)$, respectively, one obtains an infinite family of one-parameter semigroups in~$\Zn(\varphi)$ whose simultaneous linearization functions are pairwise distinct.
\end{example}

\begin{remark}\label{RM_anglim}
To conclude, it is worth observing that if $\Zn(\varphi)$ contains a non-trivial continuous one-parameter semigroup, then in view of \cite[Proposition~3.3]{CDG-Embedding}, the argument in \cite[Proof of Theorem~3.1]{pavel} shows that $\varphi$ has non-tangential limit at \textit{every} point on~$\UC$.
\end{remark}

\bibliographystyle{amsplain}

\end{document}